\theoremstyle{plain}
\newtheorem{theorem}{Theorem}[section]
\newtheorem{lemma}[theorem]{Lemma}
\newtheorem{proposition}[theorem]{Proposition}
\newtheorem{definition}{Definition}[section]
\newtheorem{example}{Example}[section]
\newtheorem*{AlgA}{Algorithm A}
\newtheorem*{AlgA1}{Algorithm A1}
\newtheorem*{AlgA2}{Algorithm A2}
\newtheorem*{AlgA3}{Algorithm A3}
\newtheorem*{Inner}{Inner Loop}
\theoremstyle{definition}
\newcommand{\argmin}{{\rm arg}\!\min}
\newcommand{\la}{\langle}
\newcommand{\ra}{\rangle}
\newcommand{\nexto}{\kern -0.54em}
\newcommand{\dZ}{{\cal Z \kern -0.7em Z}}
\newcommand{\dC}{{\rm\hbox{C \kern-0.8em\raise0.2ex\hbox{\vrule height5.4pt width0.7pt}}}}
\newcommand{\dQ}{{\rm\hbox{Q \kern-0.85em\raise0.25ex\hbox{\vrule height5.4pt width0.7pt}}}}
\newcommand{\NN}{\mathbb{N}}
\newcommand{\HH}{\mathcal{H}}
\newcommand{\RR}{\mathbbm{R}}
\newcommand{\dsty}{\displaystyle}
\begin{document}

\title{A Relaxed-Projection Splitting Algorithm for Variational Inequalities in Hilbert Spaces}
\author{
J.Y. Bello Cruz\footnote{Institute of Mathematics and Statistics,
Federal University of Goi\'as, Goi\^ania, Avenida Esperança, s/n Campus Samambaia,  Goi\^ania, GO, 74690-900, Brazil.
E-mail: yunier@impa.br}\and R. D\'iaz Mill\'an\footnote{Mathematics,
Federal Institute of Education, Science and Technology, Goi\^ania,
G.O. 74055-110, Brazil.  E-mail: rdiazmillan@gmail.com}}

\maketitle

\begin{abstract}
\noindent We introduce a relaxed-projection splitting algorithm for
solving variational inequalities in Hilbert spaces for the sum of
nonsmooth maximal monotone operators, where the feasible set is
defined by a nonlinear and nonsmooth continuous convex function
inequality. In our scheme, the orthogonal projections onto the
feasible set are replaced by projections onto separating
hyperplanes. Furthermore, each iteration of the proposed method
consists of simple subgradient-like steps, which does not demand the
solution of a nontrivial subproblem, using only individual
operators, which exploits the structure of the problem. Assuming
monotonicity of the individual operators and the existence of
solutions, we prove that the generated sequence converges weakly to
a solution.

\medskip

\noindent{\bf Keywords:} Point-to-set operator, Projection methods,
Relaxed method, Splitting methods, Variational inequality problem,
Weak convergence.

\medskip

\noindent{\bf Mathematical Subject Classification (2010):} 90C47,
49J35.
\end{abstract}

\section{Introduction}

Let $\HH$ be a real Hilbert space with inner product
$\la\cdot,\cdot\ra$ and induced norm $\|\cdot\|$. For $C$, a
nonempty, convex and closed subset of $\HH$, we define the
orthogonal projection of $x$ onto $C$, $P_C(x)$, as the unique
point in $C$ such that $\| P_C(x)-x\| \le \|y-x\|$ for all $y\in C$.
Recall that an operator $T: \HH \rightrightarrows \HH$ is monotone
if, for all $(x,u),(y,v)\in \mbox{Gr}(T)$, we have $\la x-y, u-v \ra
\ge 0,$ and it is maximal if $T$ has no proper monotone extension in
the graph inclusion sense.

In this paper, we present a relaxed-projection splitting algorithm
for solving the variational inequality problem for $T$ and $C$,
where $T$ is a sum of $m$ nonsmooth maximal monotone operators, i.e,
$T=T_1+T_2+\cdots+ T_m$ with $T_i: \HH \rightrightarrows \HH$,
$(i=1,2,\ldots,m)$ and $C$ is given of the following form: $
C:=\{x\in \HH\,:\,c(x)\leq0\}$ with $c:\HH\rightarrow \RR$ is a
continuous and convex function, possibly nondifferentiable.

It is clear that, if $T_i$, ($i=1,\ldots,m$) are monotone, then
$T=T_1+T_2+\cdots+ T_m$ is also monotone. But if $T_i$,
($i=1,2,\ldots,m$) are maximal, it does not necessarily imply that
$T$ is maximal even when $\mbox{dom}(T)$ is nonempty, additional
conditions are needed, since for example the graph of $T$ can be
even empty (as happens when $\mbox{dom}(T)=\mbox{dom}(T_1)\cap
\mbox{dom}(T_2)\cap \cdots \cap \mbox{dom}(T_m)=\emptyset$).

The problem of determining conditions under which the sum is
maximal, turns out to be of fundamental importance in the theory of
monotone operators. Results in this directions were proved in
\cite{Rock}. It is clear that in our case ($\mbox{dom}(T_i)=\HH$,
$i=1,2,\ldots,m$) all these sufficient conditions for establishing
the maximality of $T$ are satisfied.

\noindent Now, we recall the formulation of the variational
inequality problem for $T$ and $C$, namely:
\begin{equation}\label{problem}
\mbox{Find} \ \ x_*\in C \ \ \mbox{such that} \ \ \exists\, u_*\in
T(x_*),\; \mbox{with} \ \ \la u_*, x-x_*\ra\geq 0 \quad \quad
\forall \,x\in C.
\end{equation}
The solution set of problem \eqref{problem} is denoted by $S_*$.
This problem has been a classical subject in economics, operations
research and mathematical physics, particularly in the calculus of
variations associated with the minimization of infinite-dimensional
functionals; see, for instance, \cite{stamp-kinder} and the
references therein. It is closely related to many problems of
nonlinear analysis, such as optimization, complementarity and
equilibrium problems and finding fixed points; see, for instance,
\cite{peterson, todd, stamp-kinder}. Many methods have been proposed
to solve problem \eqref{problem}, for $T$ point-to-point operator;
see \cite{IS,kor,sol-svaiter-1999,sol-tseng}, and for $T$
point-to-set operator; see \cite{bao,iusem-lucambio,kono,he}. An
excellent survey of methods for variational inequality problems can
be found in \cite{facch-pang-2}.

Variational inequality problems are related to inclusion problems.
In fact, when the feasible set is the whole space, the variational
inequality problem may be formulated as an inclusion problem. In
this work, we are interested in methods that exploit the structure
of $T$. This kind of methods are called splitting, since each
iteration involves only the individual operators, but not the sum.
Many splitting algorithms have been proposed in order to solve this
kind of inclusion problem; see \cite{attouch, comb-2012,pasty,
tseng, lion, moudafi, cheng, eckstein-svaiter-2009} and the
references therein. However, in all of them, the resolvent operator
of any individual operator, has to be evaluated in each iteration. It
is important to mention, that this proximal-like iteration (the evaluation of the resolvent operator) is, in general, a
nontrivial subproblem, which demands hard work from computational
point of view. Our algorithm tries to avoid this task, replacing
proximal-like iterations by explicit subgradient-like steps. This
represents a significant advantage in both implementational and
theoretical senses. 

Another weakness which appears in most of methods proposed in the literature for solving
problem \eqref{problem}, is the necessity to compute the exact
projection onto the feasible set. This limits the applicability of the methods,
especially when such projection is computationally hard to
compute. It is well known that only in a few specific instances the
projection onto a convex set has an explicit formula. When the
feasible set of problem \eqref{problem} is a general closed convex
set, $C$, we must solve a nontrivial quadratical problem, in order
to compute the projection onto $C$. This difficulty also appears
when the feasible set of problem \eqref{problem} is expressed as the
solution set of another problem, as in this paper. In this kind of
problems, it is very hard to find the projection onto the feasible
set or even find a feasible point. One option for avoiding this
difficulty, consists in replacing at each iteration, the projection
onto $C$, by the projection onto halfspaces containing the given set
$C$ and not the current point. For variational inequality problems,
the above approach was introduced in \cite{fukushima}, for
point-to-point and strongly monotone operators assuming a coerciveness condition. Other schemes have
been proposed, in order to improve the convergence results, without
doing exact projections onto $C$: in
\cite{yunier-iusem,yunier-iusem-2} for point-to-set and paramonotone
operators; in \cite{yunier-iusem-3} for point-to-point and monotone
operators, and in \cite{kono1} for point-to-set and monotone operators assuming existence of a Slater point in the feasible set. Algorithms using similar ideas may be found in
\cite{regina-svaiter-2005, censor-gibali-2011}.

In this work, we propose a new splitting scheme for solving problem
\eqref{problem}, in which the orthogonal projections onto the
feasible set, are replaced by projections onto separating
hyperplanes and only simple subgradient-like steps are performed.
Assuming maximal monotonicity of the individual operators and
existence of solutions, we establish the weak convergence to a
solution of the whole generated sequence. 

Our method was inspired by the incremental subgradient method for
nondifferentiable optimization, proposed in \cite{nedic}, and it
uses similar ideas exposed in \cite{yunier-iusem-2,yunier-iusem, yunier-reinier}. In the case of only one operator, it is known that the natural extension of the subgradient iteration (one step) fails in general for monotone operators; see
\cite{yunier-iusem, yunier-iusem-3}. However, as will be shown, adding an ergodic extra step on the spirit of \cite{bruck} the weak convergence of
the sequence generated by our algorithm is proved. Moreover, without assuming existence of Slater points in the feasible set, as is demanding in \cite{kono1}, the proposed scheme relaxes the projection onto the feasible set. Also the inner loop replacing the projection onto the feasible set, may generates un-feasible sequence.  Furthermore, we fully avoid to solve hard subproblems, as find an approximate minimal norm vector in the image of the operator as happens in \cite{kono1}. Finally, our scheme extends the incremental subgradient iteration for restricted and nonsmooth variational inequality problem, enlarging the use of the iteration to a wide classes of problems. It is important to mention that incremental techniques have been recently widely used in the literature; and randomized versions of it may be used also in this setting, exploiting the separating structure of $T$. These ideas were already implemented for optimization problems and now the proposed scheme here uncovers many splitting ideas from optimization, which would be used in variational inequalities.

%%%%%%%%%%%%%%%%%%%%%%%%%%%%%%%%%%%%%%%%%%%%%%%%%%%%%%%%%%%%%%%%%%%%%%%%%%%%%%%

%%%%%%%%%%%%%%%%%%%%%%%%%%%%%%%%%%%%%%%%%%%%%%%%%%%%%%%%%%%%%%%%%%%%%%%%%%%%%%%

The paper is organized as follows. The next section provides some
notation and preliminary results that will be used in the remainder
of this paper. The relaxed splitting method is presented in Section
\ref{el-algoritmo} and Subsection \ref{convergencia} contains the
convergence analysis of the algorithm. Section \ref{remarks}
contains some discussion on the assumptions with examples showing
the effectiveness of our scheme.
\section{Preliminary Result}\label{preliminary}
In this section, we present some definitions and results that are
needed for the convergence analysis of the proposed method. First,
we state two well known facts on orthogonal projections.
\begin{lemma}\label{popiedades_projeccion}
Let $S$ be any nonempty closed and convex set in $\HH$, and $P_S$ be
the orthogonal projection onto $S$. For all $x,y\in \HH$ and $z\in
S$, the following properties hold:
%\begin{enumerate}
\item [ {\bf (a)}] $\dsty \| P_S(x)-P_S(y)\|\,\leq\|
x-y\|$. \item [ {\bf (b)}] $\la x-P_S(x), z-P_S(x)\ra\leq0$.
%\end{enumerate}
\end{lemma}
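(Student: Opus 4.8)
The plan is to establish part (b) first---the obtuse-angle characterization of the projection---and then derive the nonexpansiveness in part (a) as a consequence. For (b), I would fix $z\in S$ and exploit convexity: since $S$ is convex, the segment point $P_S(x)+t\bigl(z-P_S(x)\bigr)$ lies in $S$ for every $t\in[0,1]$. Because $P_S(x)$ is by definition the nearest point of $S$ to $x$, the scalar function $\phi(t):=\N{x-P_S(x)-t(z-P_S(x))}^2$ attains its minimum over $[0,1]$ at $t=0$. Expanding $\phi$ and requiring $\phi'(0)\ge 0$ yields $\ip{x-P_S(x)}{z-P_S(x)}\le 0$, which is exactly the inequality claimed in (b).

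For part (a), I would apply (b) twice. Writing $p:=P_S(x)$ and $q:=P_S(y)$, the inequality in (b) with the point $x$ and the feasible vector $z=q$ gives $\ip{x-p}{q-p}\le 0$; symmetrically, with $y$ and $z=p$ it gives $\ip{y-q}{p-q}\le 0$. Adding the two inequalities and rearranging the inner products, the cross terms combine into $\ip{x-y}{q-p}+\N{p-q}^2\le 0$, so that $\N{p-q}^2\le\ip{x-y}{p-q}$. One final application of the Cauchy--Schwarz inequality bounds the right-hand side by $\N{x-y}\,\N{p-q}$, and dividing by $\N{p-q}$ (the case $p=q$ being trivial) delivers $\N{p-q}\le\N{x-y}$, which is (a).

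Neither part presents a genuine difficulty; these are standard Hilbert-space facts, and I expect the write-up to be short. The only point requiring minor care is the algebraic bookkeeping when combining the two instances of (b) in part (a)---keeping the signs of the inner products straight so that the $\N{p-q}^2$ term emerges with the correct sign after the subtraction. I would also note, for completeness, that the existence and uniqueness of $P_S(x)$ used implicitly throughout rest on the closedness and convexity of $S$ together with the completeness of $\HH$; this is the defining property of the projection invoked in the statement, so it is assumed rather than reproved here.
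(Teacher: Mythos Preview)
Your argument is correct and entirely standard: the variational characterization in (b) follows from differentiating the squared distance along the segment toward $z$, and (a) then follows by summing the two instances of (b) and applying Cauchy--Schwarz, exactly as you outline. The algebra you flag as the only delicate point is fine; the sign on $\|p-q\|^2$ comes out right.

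The paper, however, does not prove this lemma at all: it simply cites Lemmas~1.1 and~1.2 of Zarantonello's 1971 survey. So there is no approach to compare against---you have supplied a self-contained proof where the paper defers to the literature. Your write-up would in fact be more informative than the paper's one-line citation, and since these are foundational Hilbert-space facts, either choice (cite or prove) is defensible.
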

\begin{proof}
See Lemmas $1.1$ and $1.2$ in \cite{zarantonelo}.
\end{proof}
\noindent We next deal with the so called quasi-Fej\'er convergence and its
properties.
\begin{definition}\label{def-cuasi-fejer}
Let $S$ be a nonempty subset of $\HH$. A sequence $(x^k)_{k\in \NN}$
in $\HH$ is said to be quasi-Fej\'er convergent to $S$ if and only
if for all $x \in S$ there exist $k_0\geq 0$ and a sequence
$(\delta_k)_{k\in \NN}$ in $\RR_+$ such that
$\sum_{k=0}^\infty\delta_k<\infty$ and $\| x^{k+1}-x\|^2 \leq \|
x^{k}-x\|^2 +\delta_k,$ for all $k\geq k_0$.
\end{definition}
\noindent This definition originates in \cite{Ermolev} and has been
elaborated further in \cite{IST,comb-2001}.
\begin{proposition}\label{cuasi-Fejer}
If $(x^k)_{k\in \NN}$ is quasi-Fej\'er convergent to $S$, then:
%\begin{enumerate}
\item [ {\bf (a)}] The sequence $(x^k)_{k\in \NN}$ is bounded.
\item [ {\bf (b)}] If all weak cluster points of $(x^k)_{k\in \NN}$ belong to $S$, then the sequence $(x^k)_{k\in \NN}$
is weakly convergent.
%\end{enumerate}
\end{proposition}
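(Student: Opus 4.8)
The plan is to treat the two parts separately, extracting everything from the summability of $(\delta_k)$ furnished by Definition~\ref{def-cuasi-fejer}. For part (a) I would fix an arbitrary $x\in S$ and take the associated $k_0$ and $(\delta_k)$. Iterating the defining inequality $\|x^{k+1}-x\|^2\le \|x^k-x\|^2+\delta_k$ from $k_0$ up to $k$ telescopes to
\[
\|x^{k+1}-x\|^2\le \|x^{k_0}-x\|^2+\sum_{j=k_0}^{k}\delta_j\le \|x^{k_0}-x\|^2+\sum_{j=0}^{\infty}\delta_j .
\]
Since $\sum_{j=0}^{\infty}\delta_j<\infty$, the right-hand side is a finite constant independent of $k$; together with the finitely many initial terms this shows $(\|x^k-x\|)_{k\in\NN}$ is bounded, hence so is $(x^k)_{k\in\NN}$.

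The crucial step, which underpins part (b), is to upgrade boundedness to \emph{convergence of the distances}: I claim $\lim_{k}\|x^k-x\|$ exists for every $x\in S$. To establish this I would set $a_k=\|x^k-x\|^2$ and define $c_k=a_k-\sum_{j=k_0}^{k-1}\delta_j$ for $k\ge k_0$ (empty sum at $k=k_0$). The quasi-Fej\'er inequality gives $c_{k+1}\le c_k$, so $(c_k)$ is nonincreasing, while $a_k\ge 0$ and $\sum_{j=0}^{\infty}\delta_j<\infty$ keep it bounded below; hence $(c_k)$ converges. As $\sum_{j=k_0}^{k-1}\delta_j$ tends to a finite limit, $a_k=c_k+\sum_{j=k_0}^{k-1}\delta_j$ converges as well, and therefore $\|x^k-x\|$ converges. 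This is the main technical point of the argument.

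For part (b), boundedness from part (a) together with the reflexivity of $\HH$ guarantees that $(x^k)$ has at least one weak cluster point, and by hypothesis every such point lies in $S$. It then remains to prove uniqueness of the weak cluster point, after which the whole bounded sequence must converge weakly to it. Suppose $x^*$ and $\bar{x}$ are two weak cluster points, with $x^{k_i}\rightharpoonup x^*$ and $x^{l_j}\rightharpoonup \bar{x}$; both belong to $S$, so $\lim_k\|x^k-x^*\|$ and $\lim_k\|x^k-\bar{x}\|$ exist by the claim above. Expanding
\[
\|x^k-x^*\|^2-\|x^k-\bar{x}\|^2=2\la x^k,\,\bar{x}-x^*\ra+\|x^*\|^2-\|\bar{x}\|^2 ,
\]
the left-hand side converges, so $\la x^k,\bar{x}-x^*\ra$ converges to some limit $L$. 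Passing to the limit along $(x^{k_i})$ gives $L=\la x^*,\bar{x}-x^*\ra$, and along $(x^{l_j})$ gives $L=\la \bar{x},\bar{x}-x^*\ra$; subtracting yields $\la x^*-\bar{x},\bar{x}-x^*\ra=0$, i.e. $\|x^*-\bar{x}\|^2=0$, so $x^*=\bar{x}$. Thus the weak cluster point is unique and $(x^k)$ converges weakly.

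The main obstacle is precisely the convergence-of-distances claim: boundedness alone does not suffice, and one must exploit the almost-supermartingale (telescoping) structure provided by $\sum_k\delta_k<\infty$ to conclude that each real sequence $\|x^k-x\|$ genuinely converges. Once this is in hand, part (b) reduces to the elementary Hilbert-space identity above combined with a standard Opial-type uniqueness argument.
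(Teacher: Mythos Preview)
Your argument is correct and complete. The paper itself does not give a proof of this proposition at all: it simply cites Theorem~4.1 in \cite{IST} and moves on. What you have written is precisely the standard self-contained argument that such a citation is pointing to---telescoping to get boundedness, the ``almost-supermartingale'' trick $c_k=a_k-\sum_{j=k_0}^{k-1}\delta_j$ to upgrade boundedness to convergence of $\|x^k-x\|$ for each $x\in S$, and then the Opial-type identity to force uniqueness of the weak cluster point. So there is no discrepancy in mathematical content; you have simply unpacked the reference, and your write-up would serve as a drop-in replacement if one wanted the paper to be self-contained on this point.
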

\begin{proof} See Theorem $4.1$ in \cite{IST}.\end{proof}
The next lemma will be useful for proving that the sequence
generated by our algorithm, converges weakly to some point belonging to
the solution set of problem \eqref{problem}.
\begin{lemma}\label{distancia-q-F-convergente}
If a sequence $(x^k)_{k\in \NN}$ is quasi-Fej\'er convergent to a
closed and convex set $S$, then the sequence $(P_S(x^k))_{k\in \NN}$
is strongly convergent.
\end{lemma}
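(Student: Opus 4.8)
The plan is to prove that the sequence $p^k := P_S(x^k)$ is a Cauchy sequence in $\HH$ and then invoke completeness of $\HH$ together with closedness of $S$ to obtain a strong limit lying in $S$. Throughout I write $d_k := \N{x^k - p^k}$ for the distance from $x^k$ to $S$. Two ingredients drive the argument: the variational characterization of the projection in Lemma \ref{popiedades_projeccion}(b), and the scalar consequences of quasi-Fej\'er convergence coming from Definition \ref{def-cuasi-fejer}.

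First I would record the basic projection inequality. For any $z \in S$, Lemma \ref{popiedades_projeccion}(b) gives $\ip{x^k - p^k}{z - p^k} \le 0$, and expanding $\N{x^k - z}^2 = d_k^2 + 2\ip{x^k - p^k}{p^k - z} + \N{p^k - z}^2$ yields
\begin{equation}\label{proj-ineq}
\N{p^k - z}^2 \le \N{x^k - z}^2 - d_k^2 \qquad \text{for all } z\in S.
\end{equation}
Next I would extract two convergence facts from the quasi-Fej\'er hypothesis. Fixing $z\in S$ and applying Definition \ref{def-cuasi-fejer} to the scalar sequence $a_k := \N{x^k - z}^2$, the relation $a_{k+1}\le a_k + \delta_k$ with $\sum_k \delta_k<\infty$ forces $\lim_k a_k$ to exist, since the sequence $a_k + \sum_{j\ge k}\delta_j$ is nonincreasing and bounded below. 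Moreover, since $p^m\in S$ while $p^{m+1}$ is the nearest point of $S$ to $x^{m+1}$, we have $d_{m+1}^2 \le \N{x^{m+1}-p^m}^2 \le d_m^2 + \delta_m$ by the quasi-Fej\'er inequality taken at $z=p^m$; the same tail-sum argument then shows that $\lim_k d_k^2$ exists as well.

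The heart of the proof is the Cauchy estimate. For $n \ge m$ large, I would iterate the quasi-Fej\'er inequality at the reference point $z = p^m \in S$ to obtain $\N{x^n - p^m}^2 \le d_m^2 + \sum_{j=m}^{n-1}\delta_j$, and then feed this into \eqref{proj-ineq} evaluated at the point $x^n$ with $z = p^m$:
\begin{equation*}
\N{p^n - p^m}^2 \le \N{x^n - p^m}^2 - d_n^2 \le \big(d_m^2 - d_n^2\big) + \sum_{j=m}^{n-1}\delta_j .
\end{equation*}
As $m,n\to\infty$ the first bracket tends to $0$ because $(d_k^2)$ converges, and the tail of $(\delta_j)$ tends to $0$ by summability; hence $(p^k)$ is Cauchy and therefore strongly convergent to some $p^*\in S$.

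The step I expect to be most delicate is the passage through the \emph{moving} reference points $p^m$: both the convergence of $(d_k^2)$ and the bound on $\N{x^n - p^m}^2$ require applying the quasi-Fej\'er inequality at the projections $p^m$ themselves, so the summable control furnished by Definition \ref{def-cuasi-fejer} must be usable uniformly enough across these points, i.e. through a single summable majorant valid from a common index. Granting this, completeness of $\HH$ upgrades the Cauchy property to strong convergence; a more conceptual alternative would be to observe that $z\mapsto \lim_k \N{x^k - z}^2$ is a strongly convex function on $S$ whose unique minimizer is exactly the strong limit of $(p^k)$, but the direct Cauchy argument above is the most economical.
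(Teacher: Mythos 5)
The paper offers no argument for this lemma (it is imported verbatim from Lemma 2 of \cite{yunier-iusem-2}), and your Cauchy scheme is the standard proof pattern for statements of this type; the projection inequality and the telescoping of the quasi-Fej\'er inequality are both set up correctly. However, the step you yourself single out as delicate is a genuine gap, not a formality, under Definition \ref{def-cuasi-fejer} as stated. That definition only guarantees, for each \emph{fixed} $z\in S$, the existence of some index $k_0(z)$ and some summable sequence $(\delta_k(z))_{k}$, and both are allowed to depend on $z$. Your argument applies the inequality at the moving reference points $z=p^m$ and then needs the diagonal quantities $\sum_m\delta_m(p^m)$ (to get convergence of $(d_k^2)$) and $\sum_{j\ge m}\delta_j(p^m)$ (to make the Cauchy estimate vanish) to be finite, respectively to tend to zero. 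These are tails of \emph{different} summable sequences sampled along a diagonal, and nothing in the definition controls them; likewise nothing guarantees $m\ge k_0(p^m)$, which you need to start the telescoping at $m$.

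This cannot be repaired within the stated hypotheses, because the conclusion is actually false for the pointwise notion of quasi-Fej\'er convergence in Definition \ref{def-cuasi-fejer}. Take $\HH=\ell_2(\NN)$ with canonical basis $(e_k)_{k\in\NN}$, let $S:=\{z\in\ell_2(\NN)\,:\,z_i\ge 0\ \forall i,\ \sum_i z_i\le 1\}$, a nonempty closed convex set, and put $x^k:=2e_k$. For every $z\in S$ one computes $\|x^{k+1}-z\|^2-\|x^k-z\|^2=4(z_k-z_{k+1})\le 4z_k=:\delta_k(z)$ with $\sum_k\delta_k(z)\le 4$, so $(x^k)_{k\in\NN}$ is quasi-Fej\'er convergent to $S$ with $k_0=0$; yet $P_S(x^k)=e_k$ (indeed $\la 2e_k-e_k,z-e_k\ra=z_k-1\le 0$ for all $z\in S$), and $(e_k)_{k\in\NN}$ has no strongly convergent subsequence. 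In this example the diagonal tail $\sum_{j\ge m}\delta_j(p^m)=4$ for every $m$, which is exactly where your estimate stalls. So the lemma, and your proof of it, require the summable sequence $(\delta_k)$ to be independent of the reference point (or at least uniformly summable over bounded subsets of $S$); under that stronger hypothesis your telescoping at $z=p^m$ and the resulting Cauchy argument go through verbatim. Note also that the ``more conceptual alternative'' you sketch fails for the same reason: in the example the unique minimizer over $S$ of $z\mapsto\lim_k\|x^k-z\|^2=4+\|z\|^2$ is $0$, while $P_S(x^k)=e_k\not\to 0$.
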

\begin{proof}
See Lemma $2$ in \cite{yunier-iusem-2}.
\end{proof}
We also need the following results on maximal monotone operators and
monotone variational inequalities.
\begin{lemma}\label{cond_limitacion}
Let $T:\HH \rightrightarrows \HH$ be a maximal monotone operator and
$C$ be nonempty, closed and convex set in $\HH$. Then, the solution
set of problem \eqref{problem}, $S_*$, is closed and convex.
\end{lemma}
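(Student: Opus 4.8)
The statement to prove is Lemma~\ref{cond_limitacion}: for a maximal monotone operator $T$ and a nonempty closed convex set $C$, the solution set $S_*$ of problem~\eqref{problem} is closed and convex.

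The plan is to establish that $S_*$ coincides with a set that is transparently closed and convex, exploiting the maximal monotonicity of $T$. The key device is the Minty-type reformulation of the variational inequality: I would show that
\begin{equation}\label{eq:minty-plan}
S_* = \{x_* \in C \,:\, \la v, y-x_*\ra \geq 0 \text{ for all } y\in C \text{ and all } v\in T(y)\}.
\end{equation}
The inclusion of $S_*$ in the right-hand side is the easy direction: if $x_*$ solves \eqref{problem} with associated $u_*\in T(x_*)$, then for any $y\in C$ and any $v\in T(y)$, monotonicity gives $\la v-u_*, y-x_*\ra\geq 0$, and adding $\la u_*, y-x_*\ra \geq 0$ yields $\la v, y-x_*\ra\geq 0$. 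The reverse inclusion is where maximality enters and is the main obstacle.

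For the reverse inclusion I would fix $x_*$ in the right-hand side of \eqref{eq:minty-plan} and aim to produce $u_*\in T(x_*)$ with $\la u_*, x-x_*\ra\geq 0$ for all $x\in C$. The natural route is a standard convex-analysis argument: pick any $x\in C$, form the segment $y_t = x_* + t(x-x_*)\in C$ for $t\in(0,1]$, choose $v_t\in T(y_t)$, and use the defining inequality together with local boundedness of the maximal monotone operator $T$ in the interior of its domain. Since in our setting $\mbox{dom}(T)=\HH$, the operator $T$ is locally bounded everywhere, so the $v_t$ admit a weakly convergent subnet as $t\downarrow 0$; maximality of $T$ then guarantees that the weak limit $u_*$ lies in $T(x_*)$, and passing to the limit in $\la v_t, x-x_*\ra\geq 0$ delivers $\la u_*, x-x_*\ra\geq 0$. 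The delicate point is that a single limit point $u_*$ must work simultaneously for all $x\in C$; this is handled by invoking maximal monotonicity to show that $x_*$ solves the associated inclusion, i.e. $0\in T(x_*)+N_C(x_*)$ where $N_C$ is the normal cone, from which a uniform $u_*$ is extracted.

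Granting \eqref{eq:minty-plan}, closedness and convexity follow immediately, since the right-hand side is an intersection over all $y\in C$ and $v\in T(y)$ of the sets $\{x_*\in C : \la v, y-x_*\ra\geq 0\} = \{x_*\in C : \la v, x_*\ra \leq \la v, y\ra\}$. Each such set is a halfspace intersected with the closed convex set $C$, hence closed and convex; an arbitrary intersection of closed convex sets is closed and convex, completing the proof. I expect the bulk of the technical work to reside in justifying the reverse inclusion via the local boundedness and weak-limit argument, while the topological conclusion is routine once the Minty reformulation is in hand.
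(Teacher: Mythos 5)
Your overall strategy is the right one and, as far as one can tell, matches the source the paper points to (the paper's own ``proof'' of Lemma \ref{cond_limitacion} is just a citation of Lemma 2.4(ii) in \cite{yunier-iusem-1}): identify $S_*$ with the Minty (dual) solution set, which is an intersection over $y\in C$ and $v\in T(y)$ of the closed convex sets $\{x\in C:\la v,x\ra\le\la v,y\ra\}$ and is therefore closed and convex. Note also that the identity you set out to prove is exactly Lemma \ref{consecuencia-continuidad} of this paper, so within the paper you could simply invoke that lemma and skip the reverse inclusion altogether; your easy direction ($S_*$ contained in the Minty set, by monotonicity) is correct as written, and the topological conclusion is routine.

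The one genuine soft spot is the reverse inclusion. The segment argument you sketch ($y_t=x_*+t(x-x_*)$, $v_t\in T(y_t)$, weak subnet limit) only produces, for each \emph{fixed} $x\in C$, some $u_*^x\in T(x_*)$ with $\la u_*^x,x-x_*\ra\ge 0$; this is the ``weak'' solution concept and, as you yourself flag, does not by itself yield the single $u_*$ that \eqref{problem} demands. The clean repair --- which renders the segment/local-boundedness argument unnecessary --- is the one you allude to but do not carry out: for $x_*$ in the Minty set and any $y\in C$, $v\in T(y)$, $n\in N_C(y)$, one has $\la v+n-0,\,y-x_*\ra\ge 0$, so $(x_*,0)$ is monotonically related to the graph of $T+N_C$; if $T+N_C$ is maximal monotone this forces $0\in T(x_*)+N_C(x_*)$, i.e.\ some $u_*\in T(x_*)$ with $-u_*\in N_C(x_*)$, which is precisely membership in $S_*$. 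The maximality of the sum is where a hypothesis is genuinely consumed: it holds here because $\mbox{dom}(T)=\HH$ (Rockafellar's sum theorem), but for an arbitrary maximal monotone $T$ whose domain meets $C$ badly the Minty equivalence can fail, so this constraint qualification should be stated explicitly rather than folded into the phrase ``invoking maximal monotonicity''.
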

\begin{proof}
See Lemma $2.4$(ii) in \cite{yunier-iusem-1}.
\end{proof}
The next lemma will be useful for proving that all weak cluster
points of the sequence generated by our algorithm belong to the
solution set of problem \eqref{problem}.
\begin{lemma}\label{consecuencia-continuidad}
Consider the variational inequality problem for $T$ and $C$. If
$T:\HH \rightrightarrows \HH$ is maximal monotone, then $ S_*=\{x\in
C \,:\,\la v,y-x\ra\geq0\,,\, \forall \,y\in C, \;\; \forall\, v\in
T(y)\}.$
\end{lemma}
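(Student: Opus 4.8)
The plan is to prove the set equality by double inclusion, writing $\widetilde{S}$ for the set on the right-hand side (the Minty-type solution set).

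First I would dispatch the inclusion $S_*\subseteq\widetilde{S}$, which needs only monotonicity. Take $x\in S_*$, so $x\in C$ and there is $u\in T(x)$ with $\la u,z-x\ra\ge0$ for all $z\in C$. Given any $y\in C$ and $v\in T(y)$, monotonicity of $T$ applied to the pairs $(x,u),(y,v)\in\mbox{Gr}(T)$ gives $\la v-u,y-x\ra\ge0$, whence $\la v,y-x\ra\ge\la u,y-x\ra\ge0$. Thus $x\in\widetilde{S}$. This direction is routine and does not use maximality.

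For the reverse inclusion $\widetilde{S}\subseteq S_*$ I would reformulate $S_*$ through the normal cone $N_C(x)=\{w\in\HH:\la w,y-x\ra\le0,\ \forall\,y\in C\}$. The condition ``$\exists\,u\in T(x)$ with $\la u,y-x\ra\ge0$ for all $y\in C$'' is exactly ``$T(x)\cap(-N_C(x))\neq\emptyset$'', i.e. $0\in T(x)+N_C(x)$. So it suffices to show $0\in A(x)$ for each $x\in\widetilde{S}$, where $A:=T+N_C$. The engine is maximality of $A$: since $N_C=\partial\iota_C$ is maximal monotone and $\mbox{dom}(T)=\HH$ (so $\mbox{int}\,\mbox{dom}(T)=\HH\supseteq C=\mbox{dom}(N_C)$), the sum theorem of \cite{Rock} yields that $A$ is maximal monotone. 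I would then check that $(x,0)$ is monotonically related to the whole graph of $A$: for any $(z,w)\in\mbox{Gr}(A)$ write $w=v+n$ with $v\in T(z)$, $n\in N_C(z)$, $z\in C$; then $\la v,z-x\ra\ge0$ because $x\in\widetilde{S}$ (taking $y=z$), and $\la n,z-x\ra\ge0$ because $n\in N_C(z)$ and $x\in C$, so that $\la w-0,z-x\ra\ge0$. By maximality of $A$ the pair $(x,0)$ must already belong to $\mbox{Gr}(A)$, i.e. $0\in T(x)+N_C(x)$, which is precisely $x\in S_*$.

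The hard part, and the only place maximality of $T$ is genuinely used, is this last deduction: passing from ``$(x,0)$ is monotonically consistent with $\mbox{Gr}(A)$'' to ``$(x,0)\in\mbox{Gr}(A)$''. The delicate ingredient is therefore the sum theorem, which is clean in this setting because $\mbox{dom}(T)=\HH$ removes any constraint qualification beyond $C\neq\emptyset$. If one preferred to avoid invoking the sum theorem, the alternative would be the Minty segment trick: for $y\in C$ and $t\downarrow0$ set $y_t=(1-t)x+ty\in C$, pick $v_t\in T(y_t)$, use $x\in\widetilde{S}$ to get $\la v_t,y-x\ra\ge0$, and pass to a weak limit via local boundedness of $T$ and the strong--weak closedness of $\mbox{Gr}(T)$. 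The main obstacle there is that this limit produces an element of $T(x)$ depending on the direction $y$, so one still needs a compactness/minimax argument to extract a single $u$ serving all $y\in C$ simultaneously --- which is exactly what the maximality-of-the-sum argument delivers directly.
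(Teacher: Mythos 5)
Your proof is correct, but be aware that the paper offers no argument of its own for this lemma: it only cites Lemma~3 of \cite{soldebil}, where this Minty-type equivalence is obtained essentially by the segment/local-boundedness route you sketch as your alternative, together with the extra compactness step needed to extract a single $u\in T(x)$ serving all $y\in C$. Your route is therefore genuinely different and self-contained: you rewrite membership in $S_*$ as $0\in T(x)+N_C(x)$, invoke Rockafellar's sum theorem \cite{Rock} to get maximality of $T+N_C$, and then the conclusion follows in one line from the definition of maximality, since $(x,0)$ is monotonically related to the whole graph of $T+N_C$ (your verification of the two inner products is right: take $y=z$ in the definition of $\widetilde{S}$ for the $T$-part, and $x\in C$ in the definition of $N_C(z)$ for the normal-cone part). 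What this buys is a very short proof with all the analytic difficulty outsourced to the sum theorem; what it costs is exactly the hypothesis that makes the sum theorem applicable. Indeed your argument, and in fact the lemma itself, requires $\mbox{dom}(T)=\HH$ (or at least $C\subseteq \mbox{int}\,\mbox{dom}(T)$): without such a condition the right-hand set can be vacuously all of $C$ (e.g.\ when $T(y)=\emptyset$ for every $y\in C$) while $S_*=\emptyset$. This is a standing assumption of the paper ($\mbox{dom}(T_i)=\HH$ for each $i$) rather than an explicit hypothesis of the lemma, so you should say you are using it; with that proviso both inclusions are sound, including the easy direction $S_*\subseteq\widetilde{S}$, which as you note uses only monotonicity.
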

\begin{proof}
See Lemma $3$ in \cite{soldebil}.
\end{proof}
The next lemma provides a computable upper bound for the distance
from a point to the feasible set $C$.
\begin{lemma}\label{conjetura}
Let $c:\HH\rightarrow\RR$ be a convex function and $C:=\{x\in
\HH\,:\,c(x)\le 0\}$. Assume that there exists $w\in C$ such that
$c(w)<0$. Then, for all $y\in \HH$ such that $c(y)>0$, we have
$${\rm dist }(y,C)\le \frac{\|y-w\|}{c(y)-c(w)}\,c(y)\,.$$
\end{lemma}
\begin{proof}
See Lemma $4$ in \cite{yunier-iusem-2}.
\end{proof}
Now, the next  proposition will be useful for calculating the
projections onto the halfspaces that will appear in our algorithm.
\begin{proposition}\label{proj_C(x)}
Let  $c:\HH\to \RR$  be a convex function. Given $y, z, w\in \HH$ and $v \in \partial c(z)$, define
$W_{z,w}:=\{x\in \HH\,:\,\la x-z,w-z \ra\le 0\}$ and $C_z:=\{x\in
\HH\,:\,c(z)+\la v,x-z\ra\leq0\}$. Then,
$$P_{C_z \cap W_{z,w} }(w)=w+ \max\{0,\lambda_1\}v+\lambda_2(w-z),$$where $\lambda_1, \lambda_2$ are solution of the linear system:
\begin{align*}\lambda_1\|v\|^ 2+\lambda_2\la v,w-z\ra&=-\la v, w-z\ra-c(z)\\\lambda_1\la v, w-z\ra+\lambda_2\|w-z\|^2&=-\|w-z\|^2\end{align*}
and $\dsty P_{C_z}(y)= y-\max\left\{0,\frac {c(z)+\la v,y-z\ra\,}{\|
v\|^2}\right\}\,v. $
\end{proposition}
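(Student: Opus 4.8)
The plan is to derive both identities from the variational characterization of the orthogonal projection recorded in Lemma \ref{popiedades_projeccion}(b): for a nonempty closed convex set $S$, a point $p$ equals $P_S(w)$ if and only if $p\in S$ and $\la w-p, x-p\ra\le 0$ for every $x\in S$. Equivalently, I would read each projection as the solution of the quadratic program $\min_x \tfrac12\N{x-w}^2$ subject to the affine inequalities defining $C_z$ and $W_{z,w}$. Because those constraints are affine and the objective is strictly convex, the Karush--Kuhn--Tucker conditions are necessary and sufficient, and the stationarity equation forces the minimizer to have the form $p=w+\lambda_1 v+\lambda_2(w-z)$; that is, $p-w$ lies in the span of the two constant constraint gradients $v$ and $w-z$.

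I would dispatch the single-halfspace formula first, since it is the model for the intersection. Writing $C_z=\{x:\la v,x\ra\le\la v,z\ra-f(z)\}$, either $y$ already satisfies $f(z)+\la v,y-z\ra\le 0$, so that $P_{C_z}(y)=y$ and the quantity inside the maximum is nonpositive, or the constraint is violated and the projection lands on the bounding hyperplane at $y-tv$ with $t=(f(z)+\la v,y-z\ra)/\N{v}^2>0$. Both alternatives are encoded exactly by $\max\{0,\cdot\}$, and each is checked in one line against Lemma \ref{popiedades_projeccion}(b).

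For the intersection I would substitute the ansatz $p=w+\lambda_1 v+\lambda_2(w-z)$ into the two boundary equations $f(z)+\la v,p-z\ra=0$ and $\la p-z,w-z\ra=0$ that hold when both constraints are active. Using $p-z=(1+\lambda_2)(w-z)+\lambda_1 v$ and pairing with $v$ and with $w-z$ yields precisely the stated $2\times2$ system, whose coefficient matrix is the Gram matrix of $v$ and $w-z$. The role of $\max\{0,\lambda_1\}$ is then to separate the two regimes by an active-set argument: I would show that the sign of the computed $\lambda_1$ detects whether the $C_z$-constraint is genuinely active at the projection, and that clamping the $v$-coefficient corresponds to retaining only $W_{z,w}$. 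I would close the argument by verifying, through Lemma \ref{popiedades_projeccion}(b), that the resulting point lies in $C_z\cap W_{z,w}$ and satisfies the projection inequality for every point of the intersection.

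The main obstacle is exactly this case analysis. The delicate step is confirming the nonnegativity of the Lagrange multipliers -- equivalently, matching the branch selected by $\max\{0,\lambda_1\}$ to the regime in which each constraint is active as $w$ ranges over its possible positions relative to the two halfspaces -- together with the degenerate configuration in which $v$ and $w-z$ are linearly dependent, where the Gram determinant $\N{v}^2\N{w-z}^2-\la v,w-z\ra^2$ vanishes and the $2\times2$ system collapses to a one-dimensional projection. Reconciling the active-set logic carried by the maximum with the KKT sign conditions across all these configurations is the heart of the proof; by contrast, solving the linear system itself is routine.
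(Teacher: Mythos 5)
Your overall strategy is the right one and, in spirit, the only one available: the paper offers no argument of its own here, merely citing Proposition $28.19$ of \cite{bauschke}, where the projector onto an intersection of two halfspaces is obtained exactly as you propose, via the characterization in Lemma \ref{popiedades_projeccion}(b) applied to the KKT candidates of the quadratic program. Your treatment of the single halfspace $C_z$ is complete, and substituting the ansatz $p=w+\lambda_1 v+\lambda_2(w-z)$ into the two active-boundary equations does reproduce the stated $2\times 2$ Gram system, so that part is sound.

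The gap is that the step you explicitly set aside --- ``reconciling the active-set logic carried by the maximum with the KKT sign conditions'' --- is not routine bookkeeping but the place where the argument, as you describe it, breaks. Writing stationarity as $p=w-\mu_1 v-\mu_2(w-z)$ with multipliers $\mu_1,\mu_2\ge 0$ identifies $\lambda_1=-\mu_1$ and $\lambda_2=-\mu_2$, so dual feasibility forces $\lambda_1\le 0$ whenever the $C_z$-constraint is active; the coefficient of $v$ in the projection must be nonpositive (consistent with the $-\max\{0,\cdot\}\,v$ in the $P_{C_z}$ formula of the same proposition), and your claim that the sign of the computed $\lambda_1$ detects activity therefore runs in the opposite direction from the clamp $\max\{0,\lambda_1\}$ you are trying to justify. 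Concretely, in $\RR^2$ take $z=(0,0)$, $w=(0,1)$, $v=(1,0)$, $f(z)=1$: the system gives $\lambda_1=\lambda_2=-1$, the true projection of $w$ onto $C_z\cap W_{z,w}=\{x_1\le-1,\,x_2\le 0\}$ is $(-1,0)=w+\lambda_1v+\lambda_2(w-z)$, yet the displayed formula returns $w+0\cdot v-(w-z)=z\notin C_z$. A second difficulty you would meet is that when the $C_z$-multiplier is discarded you cannot retain the $\lambda_2$ solving the coupled system: unless $\la v,w-z\ra=0$, one must re-solve with $W_{z,w}$ as the only active constraint, which gives $\lambda_2=-1$ and $p=z$, and then separately verify $z\in C_z$ (i.e.\ $f(z)\le 0$). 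Finally, the degenerate configurations you list ($v=0$, $w=z$, $v$ parallel to $w-z$, where the Gram determinant vanishes) are flagged but never resolved. Carrying out this case analysis honestly --- which is what the cited result in \cite{bauschke} does --- is the entire content of the proposition; as it stands, your proposal is a correct plan whose decisive step is missing, and executing that step faithfully would in fact reveal that the clamp in the target formula needs to be repaired before it can be proved.
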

\begin{proof}
See Proposition $28.19$ in \cite{bauschke}.
\end{proof}
\noindent Finally, we will need the following elementary result on
sequence averages.
\begin{proposition}\label{conv-convinacion-de-terminos}
Let $(p^k)_{k\in \NN}\subset\HH$ be a sequence strongly convergent
to $\tilde{p}$. Take nonnegative real numbers $(\zeta_{k,j})_{k\in
\NN,\, 0\le j\le k}$ such that $\lim_{k\rightarrow\infty}
\zeta_{k,j}=0$ for all $0\le j\le k$ and $\sum_{j=0}^k\zeta_{k,j}=1$
for all $k\in \NN$. Define
$$
w^k:=\sum_{j=0}^k \zeta_{k,j}p^j.
$$
Then $(w^k)_{k\in \NN}$ also converges strongly to $\tilde{p}$.
\end{proposition}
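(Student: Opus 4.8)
The plan is to recognize this statement as an instance of the classical Toeplitz (regular summability) lemma and to reproduce its proof directly in the Hilbert-space setting, since strong convergence is governed entirely by the norm. The structural fact I would exploit is that the weights sum to one, $\sum_{j=0}^k \zeta_{k,j}=1$, which lets me rewrite the error as a weighted combination of the individual errors:
$$
w^k-\tilde{p}=\sum_{j=0}^k \zeta_{k,j}\bigl(p^j-\tilde{p}\bigr),
$$
so that, by the triangle inequality and the nonnegativity of the $\zeta_{k,j}$,
$$
\|w^k-\tilde{p}\|\le \sum_{j=0}^k \zeta_{k,j}\,\|p^j-\tilde{p}\|.
$$
Everything then reduces to controlling this scalar sum.

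First I would record two consequences of the hypothesis $p^k\to\tilde{p}$: the sequence $(\|p^j-\tilde{p}\|)_{j\in\NN}$ is bounded, say by some $M>0$, because strongly convergent sequences are bounded; and, given $\varepsilon>0$, there is an index $N$ with $\|p^j-\tilde{p}\|<\varepsilon/2$ for all $j>N$. I would then split the sum at $N$ into a fixed "head" $\sum_{j=0}^N$ and a "tail" $\sum_{j=N+1}^k$, and estimate each separately.

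The tail is immediate: since $\|p^j-\tilde{p}\|<\varepsilon/2$ there and the weights are nonnegative with total mass one, $\sum_{j=N+1}^k \zeta_{k,j}\|p^j-\tilde{p}\|\le (\varepsilon/2)\sum_{j=N+1}^k \zeta_{k,j}\le \varepsilon/2$ uniformly in $k$. The head is a fixed finite sum of at most $N+1$ terms, each bounded by $M$; since $N$ is frozen and $\zeta_{k,j}\to 0$ as $k\to\infty$ for each fixed $j$, the finite sum $\sum_{j=0}^N \zeta_{k,j}$ tends to $0$, so I can choose $K$ with $M\sum_{j=0}^N \zeta_{k,j}<\varepsilon/2$ for all $k>K$. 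Combining the two estimates gives $\|w^k-\tilde{p}\|<\varepsilon$ for every $k>\max\{N,K\}$, and since $\varepsilon$ was arbitrary the claim follows.

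The delicate point I would flag as the heart of the argument is the interplay between the two limiting regimes and the correct order of quantifiers: the tail is controlled by the convergence of $(p^j)$ uniformly in $k$ (using only that the weights sum to at most one), whereas the head is controlled by the pointwise decay $\zeta_{k,j}\to 0$, which is available precisely because $N$ is chosen before letting $k\to\infty$. Fixing $\varepsilon$ first, then $N$, then $K=K(N,\varepsilon)$, is what makes the estimate go through; no properties of $\HH$ beyond the triangle inequality are needed.
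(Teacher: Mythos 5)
Your proof is correct and complete: it is the standard Silverman--Toeplitz head/tail argument, with the quantifiers handled in the right order (fix $\varepsilon$, then $N$ from the convergence of $(p^j)$, then $K$ from the pointwise decay $\zeta_{k,j}\to 0$ over the finite head). The paper itself gives no in-line proof and simply cites Proposition $3$ of an earlier reference, and your self-contained argument is exactly the proof one expects there, so there is nothing to flag.
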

\begin{proof}
See Proposition $3$ in \cite{yunier-iusem-2}.
\end{proof}
\section{A Relaxed-Projection Splitting Algorithm}\label{el-algoritmo}
In this section, we introduce an algorithm for solving the
variational inequality problem when \newline $T=T_1+T_2+\ldots+T_m$
and $C$ is of the form
\begin{equation}\label{C}
C=\{x\in \HH\,:\,c(x)\leq0\},
\end{equation}
where $c:\HH\rightarrow\RR$ is a continuous convex function. Differentiability of $c$ is not assumed and therefore the
representation (\ref{C}) is rather general, since any system of
inequalities $c_j(x)\leq0$ with $j\in J$, where each function $c_j$ is continuous and 
convex, may be represented as in (\ref{C}) with
$c(x)=\sup\{c_j(x)\,:\,j\in J\}$. 

Observe that except in very special
cases (e.g. when $C$ is a halfspace, or a ball, or a subspace, or
a box) the exact calculation of the orthogonal projection is a
computationally nontrivial task. Since most of the direct method for solving restricted variational inequalities use exact orthogonal projections, following the ideas proposed by Fukushima in \cite{fukushima} and later in \cite{yunier-iusem-2}, we introduce an inner loop replacing the projection onto $C$ by an unfeasible point.

\begin{center}\fbox{\begin{minipage}[b]{\textwidth}
\begin{Inner}\label{boundary} Given $z$, $\theta$ and $\alpha$.\\
{\bf Input.} Set $y^{0}=z$ and $j=0$. Given $j$ compute
\begin{equation}\label{ykj} y^{j+1}:=P_{C_{j}\cap
W_{j}}(y^{0}),\end{equation}
\begin{equation}\label{Ck}
C_{j}:=\{x\in \HH\,:\,c(y^{j})+\la g^{j},x-y^{j}\ra\leq0\} \qquad g^{j}\in \partial c(y^{j}),
\end{equation}
\begin{equation}\label{Wkj}
W_{j}:=\{x\in \HH\,:\,\la x-y^{j},y^{0}-y^{j}\ra\leq0\}.
\end{equation}
\quad  {\bf While} $
{\rm dist}(y^{j+1}, C)> \theta \alpha$ \;  {\bf do} \;
$j=j+1$  {\rm and} $y^{j+1}=y^{j}$.

\medskip

\quad {\bf End While}

\noindent {\bf Output.} $y^{j+1}$ {\rm and} $C_{j}$.
\end{Inner}\end{minipage}}\end{center}

%%%%%%%%%%%%%%%%%%%%%%%%%%%%%%%%%%

\noindent Consider an exogenous sequence $(\alpha_k)_{k\in \NN}$ in
$\RR_{++}$. Then the algorithm is defined as follows.

\begin{center}\fbox{\begin{minipage}[b]{\textwidth}
\begin{AlgA}\label{A1} Given $\theta>0$.

\item []\noindent  {\bf Initialization Step.} Take $ x^0\in \HH$  and define $z^0:=x^0$ and $\sigma_0:=\alpha_0$.

\item [] \noindent {\bf Iterative Step.} Given $z^k$. If $c(z^k)\le 0$, then
take $z^k_0:=z^k$ and $C_k=\{x \in \HH: \la g^k, x-z^{k}_{0} \ra\leq
0\}$ where $g^k \in \partial c^+(z^{k}_{0})$ with
$c^+(x)=\max\{0,c(x)\}$. Else, perform {\bf Inner Loop\,($z^k,\theta,\alpha_k$)=:($z_0^k,C_k$)}. 
Compute the cycle, from $i=1,2,\ldots,m$, as follows
\begin{equation}\label{algoritmo_1_paso_2}
z^{k}_i=P_{C_{k}}\left(z^{k}_{i-1}-\alpha_k u^{k}_{i}\right),
\end{equation}
where $u_{i}^{k}\in T_i(z^{k}_{i-1})$.
 The vector $z^{k+1}$ is obtained doing $ z^{k+1}=z^{k}_m$. Define
\begin{equation}\label{16'}\sigma_k:=\sigma_{
k-1}+\alpha_k,\end{equation} and
\begin{equation}\label{algoritmo_2_paso_3-bonito}
x^{k+1}:=\left(1-\frac{\alpha_k}{\sigma_k}\right)x^{k}+\frac{\alpha_k}{
\sigma_k} z^{k+1}.
\end{equation}
\end{AlgA}\end{minipage}}\end{center}
Before the formal analysis of the convergence properties of
{\bf Algorithm A}, we make some comments and discuss about our
assumptions. 

First, unlike other projection methods, {\bf Algorithm A}
generates a sequence $(x^k)_{k\in \NN}$ which is not necessarily
contained in the set $C$. As will be shown in the next subsection,
the generated sequence is asymptotically feasible and, in fact,
converges to some point in the solution set. We observe that in {\bf {\bf Algorithm A}}, {\bf Inner Loop} starts with the point $z^k$
and ends with a point $z^{k}_{0}$ close to $C$, in fact ${\rm
dist}(z^{k}_{0},C)\le \theta \alpha_k$, this is possible since {\bf Inner Loop}, in the step $k$ of our algorithm, is a direct application of the
algorithm proposed in \cite{yunier-iusem-4}, with $C=\HH$,
$x^0=y^{k,0}$, $f(x)=c^+(x):=\max\{0,c(x)\}$ and $f_*:=\inf_{x\in
\HH}c^+(x)=0$. Recently has been proposed in \cite{yunier-wlo} a
restricted memory level bundle method improving the convergence
result of \cite{yunier-iusem-4}, which may be used into the inner
loop accelerating its convergence.

It might seem that this inner loop can be replaced by any finite
procedure leading to an approximation of $P_C(z^k)$, say a point
$z^{k}_{0}$ such that $\|z^{k}_{0}-P_C(z^k)\|$ is sufficiently small. This
is not the case: In the first place, depending on the location of
the intermediate hyperplanes $C_{k,j}$ and $W_{k,j}$, the sequence
$(y^{k,j})_{j\in \NN}$ may approach points in $C$ far from
$P_C(z^k)$; in fact the computational cost of our inner loop is
lower than the computation of an inexact orthogonal projection of
$z^k$ onto $C$. On the other hand, it is not the case that any point
$z$ close enough to $P_C(z^k)$ will do the job. The crucial relation
for convergence of our method is $\|z^{k}_{0}-x\|\le \|z^k-x\|$ for
all $x\in C$, which may fail if we replace $z^{k}_{0}$ by points $z$
arbitrarily close to $P_C(z^k)$.

{\bf Algorithm A} is easily implementable, since $P_{W_{k,j}\cap C_{k,j}}$
and $P_{C_k}$, given in \eqref{ykj} and \eqref{algoritmo_1_paso_2} respectively,
have easy formula by Proposition \ref{proj_C(x)}. Hence, by
Proposition \ref{proj_C(x)} the projections onto $C_{k,j}\cap
W_{k,j}$ in {\bf Inner Loop} when used inside of {\bf Algorithm A} and $C_k$ in
\eqref{algoritmo_1_paso_2}, can be calculated explicitly. Therefore,
{\bf Algorithm A} may be considered as an explicit method, since it does not
solve hard subproblems.

\noindent We need the following boundedness assumptions on $\partial
c$.

\vspace{0.05in}

\noindent (H1) $\partial c$ is bounded on bounded sets.

In finite-dimensional spaces, this assumption is always satisfied in
view of Theorem $4.6.1$(ii) in \cite{iusem-regina}, due to the maximality of
$\partial c$. The maximality has been
proved in \cite{188}. For some equivalences with condition (H1), see
for instance Proposition $16.17$ in \cite{bauschke}. Moreover in the literature, (H1) has
been considered as the convergence analysis of
various methods solving optimization problems in
infinite-dimensional spaces; see, for instance,
\cite{AIS,yunier-iusem-4, poljak}. We only use this assumption for
establishing the well definition of {\bf Inner Loop}.

\vspace{0.05in}

\noindent (H2) Define
\begin{equation}\label{eta-k}\eta_k:=\max_{1\le i\le m}\{1,\|u_i^k\|\},\end{equation} with $u_i^k\in T_i(z_{i-1}^k)$.
Then, assuming that the stepsize sequence, $(\alpha_k)_{k\in \NN}$,
satisfies:
\begin{align}\label{betapaso}
\sum_{k=0}^\infty\alpha_k&=\infty,
\\ \label{betapaso1}
\sum_{k=0}^\infty(\eta_k\alpha_k)^2&<\infty.
\end{align}
We mention that in the analysis of \cite{nedic}, a stronger
condition than (H2) is required for proving convergence of the
incremental subgradient method. Recently, similar assumptions have
been used in the convergence analysis in \cite{bot, attouch}.

The condition \eqref{betapaso} (divergent-series) on the stepsizes
has been used widely for the convergence of classical projected
subgradient methods; see \cite{AIS,poljak}.

The condition \eqref{betapaso1} is used for establishing Proposition
\ref{cuasi-Fejer*2}, which implies
 boundedness of the sequence $(z^k)_{k\in \NN}$. When $(\alpha_k)_{k\in \NN}$ is in $\ell_2(\NN)$, the
condition \eqref{betapaso1} holds, assuming that the image of $T_i$,
$(i=1,2,\ldots,m)$ are bounded. Furthermore, it is possible to
assume a weaker sufficient condition for \eqref{betapaso1} as for
example: if $(\alpha_k)_{k\in \NN}=(1/k)_{k\in \NN}$, then the
sequence $(\eta_k)_{k\in \NN}$, defined in \eqref{eta-k}, may be
unlimited like the sequence $(k^s)_{k\in \NN}$ for any $s\in
(0,1/2)$. Finally, note that \eqref{eta-k} implies that $\eta_k\ge1$ and as consequence $\eta_k^2\ge \eta_k$, which combines with \eqref{betapaso1} implies
\begin{equation}\label{ponto-5}
\sum_{k=0}^\infty\eta_k\alpha_k^2<\infty.
\end{equation}

\subsection{Convergence Analysis }\label{convergencia}

Before establishing convergence of {\bf Algorithm A}, we need to ascertain
the validity of the stopping criterion as well as the fact that
{\bf Algorithm A} is well defined.
\begin{proposition}\label{propiedades_relaxed} Take $C$, $C_{k,j}$, $W_{k,j}$, $W_{k}$ and $C_k$ defined by {\bf Algorithm A}. Then,
%\begin{enumerate}
\item [ {\bf (a)}] $C\subseteq C_{k,j}\cap W_{k,j}$, $C\subseteq C_k$ and
$z^k_i\in C_k$ for all $k$, $j$ and $i=0,1,\ldots,m$.
\item [ {\bf (b)}] {\bf Inner Loop} is well defined.
%\end{enumerate}
\end{proposition}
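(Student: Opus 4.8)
The plan is to handle the two parts separately: (a) follows from direct use of the subgradient inequality together with the projection characterization in Lemma~\ref{popiedades_projeccion}(b), while (b) reduces to invoking the convergence analysis of the inner scheme borrowed from \cite{yunier-iusem-4}.

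For part (a) I would first establish $C\subseteq C_{k,j}$. Since $g^{k,j}\in\partial c(y^{k,j})$, the subgradient inequality gives $c(x)\ge c(y^{k,j})+\la g^{k,j},x-y^{k,j}\ra$ for every $x\in\HH$; hence any $x\in C$ (so $c(x)\le0$) satisfies $c(y^{k,j})+\la g^{k,j},x-y^{k,j}\ra\le c(x)\le0$, i.e. $x\in C_{k,j}$. For $C\subseteq W_{k,j}$ I would argue by induction on $j$. When $j=0$ one has $W_{k,0}=\HH$ (because $y^{k,0}-y^{k,0}=\bz$), so the inclusion is trivial, and together with the previous step this gives $C\subseteq C_{k,0}\cap W_{k,0}$, which is nonempty since $S_*\neq\emptyset$ forces $C\neq\emptyset$; thus $y^{k,1}=P_{C_{k,0}\cap W_{k,0}}(y^{k,0})$ is well defined. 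Assuming $C\subseteq C_{k,j}\cap W_{k,j}$, for any $x\in C$ Lemma~\ref{popiedades_projeccion}(b) applied with $S=C_{k,j}\cap W_{k,j}$ yields $\la y^{k,0}-y^{k,j+1},\,x-y^{k,j+1}\ra\le0$, which is exactly the defining inequality of $W_{k,j+1}$; combined with $C\subseteq C_{k,j+1}$ this closes the induction.

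Next I would treat $C\subseteq C_k$ by distinguishing the two branches of the iterative step. If $c(z^k)\le0$ then $c^+(z^k)=0$ and, for $g^k\in\partial c^+(z^k_0)$ with $z^k_0=z^k$, the subgradient inequality for $c^+$ gives $0=c^+(x)\ge c^+(z^k_0)+\la g^k,x-z^k_0\ra=\la g^k,x-z^k_0\ra$ for every $x\in C$, i.e. $x\in C_k$. If instead $c(z^k)>0$, then $C_k$ is the halfspace $C_{k,j}$ returned by the inner loop, so $C\subseteq C_k$ follows from the inclusion $C\subseteq C_{k,j}$ already proved. Finally, the membership $z^k_i\in C_k$ for $i=0,1,\dots,m$ is immediate: $z^k_0$ either equals $z^k$ (which trivially satisfies $\la g^k,z^k_0-z^k_0\ra=0$) or equals the inner-loop output $y^{k,j+1}=P_{C_{k,j}\cap W_{k,j}}(y^{k,0})\in C_{k,j}=C_k$; and for $i\ge1$ the point $z^k_i=P_{C_k}(z^k_{i-1}-\alpha_k u^k_i)$ lies in $C_k$ by definition of the projection.

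For part (b), well-definedness has two components. Each projection $P_{C_{k,j}\cap W_{k,j}}(y^{k,0})$ exists and is unique because $C_{k,j}\cap W_{k,j}$, as the intersection of two halfspaces, is closed and convex, and it is nonempty since it contains $C\neq\emptyset$. It then remains to show the while-loop stops after finitely many inner iterations. Here I would appeal to the fact, noted in the text, that the inner loop coincides with the algorithm of \cite{yunier-iusem-4} applied to $f=c^+$ with $f_*:=\inf_{x\in\HH}c^+(x)=0$ and $C=\HH$; under (H1), which guarantees boundedness of the selected subgradients along the bounded, Fej\'er-monotone sequence $(y^{k,j})_{j\in\NN}$, that analysis yields ${\rm dist}(y^{k,j},C)\to0$ as $j\to\infty$. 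Consequently, for the fixed threshold $\theta\alpha_k>0$ the test ${\rm dist}(y^{k,j+1},C)>\theta\alpha_k$ must fail after finitely many steps, so the loop terminates; moreover ${\rm dist}(y^{k,j+1},C)$ is effectively boundable through Lemma~\ref{conjetura}, so the stopping criterion is checkable. I expect this termination argument to be the main obstacle, since it is the only place relying on the external convergence result and on assumption (H1); the inclusions in (a), by contrast, are routine consequences of convexity and the projection inequality.
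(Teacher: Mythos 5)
Your proof is correct and follows the same overall structure as the paper's: the inclusion $C\subseteq C_{k,j}$ (and $C\subseteq C_k$) via the subgradient inequality, membership $z^k_i\in C_k$ directly from the projection steps, and termination of the while-loop via the strong convergence of $(y^{k,j})_{j\in\NN}$ to $P_C(y^{k,0})\in C$ established in Theorem~2 of the cited reference, which forces ${\rm dist}(y^{k,j},C)\to 0$ and hence failure of the test after finitely many steps. The one place where you genuinely depart from the paper is the inclusion $C\subseteq W_{k,j}$: the paper simply invokes Proposition~4 and Corollary~1 of the external reference, whereas you prove it inline by induction on $j$, starting from $W_{k,0}=\HH$ and propagating the inclusion with Lemma~\ref{popiedades_projeccion}(b) applied to $y^{k,j+1}=P_{C_{k,j}\cap W_{k,j}}(y^{k,0})$. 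This buys a self-contained and arguably cleaner argument for that sub-step (and it simultaneously certifies nonemptiness of $C_{k,j}\cap W_{k,j}$, which the paper leaves implicit); the paper's citation-based route is shorter but opaque. Your case split for $C\subseteq C_k$ between the branch $c(z^k)\le 0$ (using the subgradient inequality for $c^+$) and the inner-loop branch (identifying $C_k$ with the returned $C_{k,j}$) is also slightly more careful than the paper's one-line remark that $\partial c^+(y^{k,j})=\partial c(y^{k,j})$ off $C$. No gaps.
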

\begin{proof}
\quad (a):~It follows from \eqref{Ck} and the definition of the
subdifferential that $C\subseteq C_{k,j}$ for all $k$ and $j$. Note
that for all $y^{k,j}\notin C$, we have $\partial
f(y^{k,j})=\partial c(y^{k,j})$, thus $C\subseteq C_{k}$ by
\eqref{Ck}. Using
 Proposition $4$ and Corollary $1$ of
\cite{yunier-iusem-4}, with $C=\HH$, $f(x)=c^+(x):=\max\{0,c(x)\}$
implying that $f_*=0$, since our $C\neq \emptyset$, we get
$C\subseteq W_{k,j}$ for all $k$, $j$. By \eqref{ykj}, we have $z_0^k\in C_k$ and by
\eqref{algoritmo_1_paso_2}, $z^k_i\in C_k$ for all $k$,
$(i=1,\ldots,m)$.

\medskip

\noindent (b):~Regarding the projection step in \eqref{ykj} and
\eqref{algoritmo_1_paso_2}, item (a) shows that the projections onto
$C_{k,j}\cap W_{k,j}$ and $C_k$ are well defined. Using Theorem $2$
of \cite{yunier-iusem-4}, with $C=\HH$, $x^0=y^{k,0}$,
$f(x)=c^+(x):=\max\{0,c(x)\}$, we have that $(y^{k,j})_{j\in \NN}$
converges strongly to $P_C(y^{k,0})\in C$. Thus, {\bf Inner Loop} stops after finitely many steps. 
\end{proof}
\noindent A useful proposition for the convergence of algorithm is:
\begin{proposition}\label{propie}
Let $(z^k)_{k\in \NN}$ and $(z_i^k)_{k\in \NN}$, with $i=0,1,\ldots,
m$ be sequences generated by {\bf Algorithm A}. Then
%\begin{itemize}
\item [ {\bf (a)}] $\|z_j^k- z_i^k\|\leq (j-i)\eta_k\alpha_k$, for all $k\in \NN$ and $0\leq i\leq j \leq m$.
\item [ {\bf (b)}] For any $x\in C$ and $u\in T(x)$ such that
$u=\sum_{i=1}^m u_i$ with $u_i\in T_i(x)$, $(i=1,2,\ldots, m)$.
Then,
\begin{equation*}
\|z^{k+1}-x\|^2 \leq \|z^{k}-x\|^2+m\left[(\eta_k\alpha_k)^2+
(m-1)\eta\eta_k\alpha_{k}^2\right]-2\alpha_k \la u,z^{k}_{0}-x\ra,
\end{equation*} where $\eta:=\max_{1\le i\le m}{\|u_i\|}$.
%\end {itemize}
\end{proposition}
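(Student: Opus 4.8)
The plan is to establish (a) first by a telescoping argument over the consecutive points of the cycle, and then to feed (a), together with monotonicity of each $T_i$, into a one-step projection estimate and sum over the cycle to obtain (b).

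For (a), the key observation is that by Proposition \ref{propiedades_relaxed}(a) each $z_{i-1}^k$ already lies in $C_k$, so $z_{i-1}^k = P_{C_k}(z_{i-1}^k)$. Since $z_i^k = P_{C_k}(z_{i-1}^k - \alpha_k u_i^k)$, nonexpansiveness of $P_{C_k}$ (Lemma \ref{popiedades_projeccion}(a)) gives $\|z_i^k - z_{i-1}^k\| \le \alpha_k\|u_i^k\| \le \eta_k\alpha_k$, the last step being the definition of $\eta_k$ in \eqref{eta-k}. Summing these consecutive estimates via the triangle inequality over the indices between $i$ and $j$ yields $\|z_j^k - z_i^k\| \le (j-i)\eta_k\alpha_k$.

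For (b), I would analyze a single cycle step. Since $x \in C \subseteq C_k$ (Proposition \ref{propiedades_relaxed}(a)) and hence $x = P_{C_k}(x)$, nonexpansiveness of $P_{C_k}$ applied to $z_i^k = P_{C_k}(z_{i-1}^k - \alpha_k u_i^k)$ gives, after expanding the square,
\[
\|z_i^k - x\|^2 \le \|z_{i-1}^k - x\|^2 - 2\alpha_k\la u_i^k, z_{i-1}^k - x\ra + \alpha_k^2\|u_i^k\|^2,
\]
with $\alpha_k^2\|u_i^k\|^2 \le (\eta_k\alpha_k)^2$. Monotonicity enters in the inner-product term: since $u_i^k \in T_i(z_{i-1}^k)$ and $u_i \in T_i(x)$, monotonicity of $T_i$ yields $\la u_i^k, z_{i-1}^k - x\ra \ge \la u_i, z_{i-1}^k - x\ra$, and I would then shift the base point from $z_{i-1}^k$ to the common point $z_0^k$ using part (a) and Cauchy--Schwarz, via $\la u_i, z_{i-1}^k - z_0^k\ra \ge -\|u_i\|\,(i-1)\eta_k\alpha_k \ge -(i-1)\eta\eta_k\alpha_k$. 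This produces the single-step bound
\[
\|z_i^k - x\|^2 \le \|z_{i-1}^k - x\|^2 - 2\alpha_k\la u_i, z_0^k - x\ra + (\eta_k\alpha_k)^2 + 2(i-1)\eta\eta_k\alpha_k^2.
\]

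It then remains to telescope this over $i = 1, \ldots, m$, using $\sum_{i=1}^m u_i = u$ and $\sum_{i=1}^m(i-1) = m(m-1)/2$, which gives
\[
\|z_m^k - x\|^2 \le \|z_0^k - x\|^2 - 2\alpha_k\la u, z_0^k - x\ra + m(\eta_k\alpha_k)^2 + m(m-1)\eta\eta_k\alpha_k^2,
\]
and to replace $\|z_0^k - x\|^2$ by $\|z^k - x\|^2$. This last replacement is the delicate point, and is exactly the crucial relation highlighted after the algorithm: if $c(z^k) \le 0$ then $z_0^k = z^k$ and nothing is needed, whereas if the \textbf{Inner Loop} runs, $z_0^k$ is a projection of $z^k$ onto a set $C_{k,j}\cap W_{k,j}$ that contains $C$ (Proposition \ref{propiedades_relaxed}(a)); since $x \in C$ belongs to that set, nonexpansiveness gives $\|z_0^k - x\| \le \|z^k - x\|$. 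Recalling $z^{k+1} = z_m^k$ and regrouping the error terms as $m[(\eta_k\alpha_k)^2 + (m-1)\eta\eta_k\alpha_k^2]$ yields the assertion. I expect the main obstacle to be the bookkeeping in the index shift from $z_{i-1}^k$ to $z_0^k$, so that the accumulated constant comes out to exactly $m(m-1)\eta\eta_k\alpha_k^2$, rather than the projection estimates themselves, which are routine.
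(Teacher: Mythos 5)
Your proposal is correct and follows essentially the same route as the paper's own proof: nonexpansiveness of $P_{C_k}$ together with $z_i^k, x \in C_k$ for the one-step estimates, monotonicity of each $T_i$ to replace $u_i^k$ by $u_i$, the shift of base point to $z_0^k$ via part (a) and Cauchy--Schwarz giving the $m(m-1)\eta\eta_k\alpha_k^2$ term, and finally the relation $\|z_0^k-x\|\le\|z^k-x\|$ from the Inner Loop. The only (immaterial) difference is that you perform the index shift at the single-step level before summing, whereas the paper sums first and then shifts; your explicit justification of $\|z_0^k-x\|\le\|z^k-x\|$ via $C\subseteq C_{k,j}\cap W_{k,j}$ is in fact more detailed than the paper's.
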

\begin{proof}
\noindent (a): Since $z^k_i\in C_k$ for all $0\leq i \leq m$ and all
$k$, taking any  $u_j^k\in T_j(z_{j-1}^k) $ and using Lemma \ref{popiedades_projeccion}(a) and the
Cauchy-Schwarz inequality, we have
 \begin{eqnarray*}
  \|z^{k}_{j}-z^{k}_{i}\|&=& \left\|P_{C_k}\left(z^{k}_{j-1}-\alpha_k u_{j}^{k}\right)-P_{C_k}\left(z^k_i\right)\right\| \\
 &\le&  \|z^{k}_{j-1}-z^k_i-\alpha_k u^k_{j}\|
 \leq \|z^{k}_{j-1}-z^k_i \|+\|u_{j}^{k}\|\alpha_k \leq \cdots \leq
 (j-i)\,\eta_k\alpha_k.
\end{eqnarray*}

\noindent (b): Take $(x,u)\in \mbox{Gr}(T)$ with $u=\sum_{i=1}^{m}
u_i$ and $u_i\in T_i(x)$, $(i=1,2,\ldots, m)$. Using Lemma
\ref{popiedades_projeccion}(a) in the first inequality, and the
monotonicity of each component operator $T_i$ in the latter, we
obtain,
\begin{align}\nonumber
\dsty\| z_{i}^{k}-x\|^2&= \left\| P_{C_k}\left( z^{k}_{i-1}-\alpha_k
u_{i}^k\right)-P_{C_k}(x)\,\right\|^2\leq\left\|\left(
z_{i-1}^{k}-\alpha_k u_{i}^k\right)-x\right\|^2\\\nonumber &= \|
 z_{i-1}^{k}-x\|^2+(\|u_i^k\|\alpha_k)^2-2\alpha_k\,\,\la
u_{i}^k, z_{i-1}^{k}-x\ra\\ \nonumber &\leq\|
 z_{i-1}^{k}-x\|^2+(\|u_i^k\|\alpha_k)^2-2\alpha_k\,\,\la
u_{i}, z_{i-1}^{k}-x\ra, \ \ \ \ \nonumber
\end{align}
for all $k$ and $i=1,2,\ldots, m$. By summing the above inequalities
over $i=1,2,\ldots,m$ and using \eqref{eta-k}, we get
\begin{align}\nonumber
\dsty \|z^{k+1}-x\|^2&\leq\|
z^{k}_{0}-x\|^2+m(\eta_k\alpha_k)^2-\,2\, \alpha_k \sum_{i=1}^{m}\la
u_{i},z^{k}_{i-1}-x\ra\\\nonumber &=
\|z^{k}_{0}-x\|^2+m(\eta_k\alpha_k)^2-2\alpha_k
\sum_{i=1}^{m}\left(\la u_i,z^{k}_{0}-x\ra + \la
u_i,z^{k}_{i-1}-z^{k}_{0}\ra\right)\\\nonumber &=
\|z^{k}_{0}-x\|^2+m(\eta_k\alpha_k)^2-2\alpha_k \la u,z^{k}_{0}-x\ra
- 2\alpha_k \sum_{i=1}^{m}\la u_i,z^{k}_{i-1}-z^{k}_{0}\ra.\nonumber
\end{align}
Using the Cauchy-Schwarz inequality and item (a) for $j=m$ and
$i=0$,  we have
\begin{align*}
 \dsty \|z^{k+1}-x\|^2&\leq \|z^{k}_{0}-x\|^2+m(\eta_k\alpha_k)^2-2\alpha_k \la u,z^{k}_{0}-x\ra+2\alpha_k
 \sum_{i=1}^{m}\|u_i\|\|z^{k}_{i-1}-z^{k}_{0}\|\\ \nonumber &\leq \|z^{k}_{0}-x\|^2+m(\eta_k\alpha_k)^2
 + 2 \alpha_{k}^2\sum_{i=1}^{m}(i-1)\|u_i\|\eta_k-2\alpha_k \la u,z^{k}-x\ra\\&\le \|z^{k}-x\|^2+m(\eta_k\alpha_k)^2
 + m(m-1)\eta\eta_k\alpha_{k}^2-2\alpha_k \la u,z^{k}_{0}-x\ra,
\end{align*}
where the last inequality is a direct consequence of the fact that
$z^{k}_{0}$ is obtained by {\bf Inner Loop} and
defining $\eta=\max_{1\le i\le m}{\|u_i\|}$, we prove the proposition.\end{proof}
We continue by proving the quasi-Fej\'er properties of the sequences
$(z^k)_{k\in \NN}$ generated by {\bf Algorithm A}. From now on, we assume
that the solution set, $S_*$, of problem \eqref{problem} is
nonempty.
\begin{proposition}\label{cuasi-Fejer*2}
The sequences $(z^k)_{k\in \NN}$ are quasi-Fej\'er convergent to
$S_*$.
\end{proposition}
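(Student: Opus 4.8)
The plan is to read off the quasi-Fejér inequality of Definition \ref{def-cuasi-fejer} directly from Proposition \ref{propie}(b), for an arbitrary but fixed $x\in S_*$. By the very definition of the solution set in \eqref{problem}, there is a vector $u\in T(x)$ — which, by definition of the sum, decomposes as $u=\sum_{i=1}^m u_i$ with $u_i\in T_i(x)$ — satisfying $\la u,y-x\ra\ge 0$ for every $y\in C$. Applying Proposition \ref{propie}(b) with this pair $(x,u)$ and setting $\eta=\max_{1\le i\le m}\|u_i\|$, one already obtains
\begin{equation*}
\|z^{k+1}-x\|^2 \leq \|z^{k}-x\|^2+m\left[(\eta_k\alpha_k)^2+(m-1)\eta\,\eta_k\alpha_{k}^2\right]-2\alpha_k \la u,z^{k}_{0}-x\ra .
\end{equation*}
Thus the whole task reduces to bounding the cross term $-2\alpha_k\la u,z^{k}_{0}-x\ra$ from above by a summable quantity.

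The only delicate point is that $z^{k}_{0}$ need not lie in $C$, so $\la u,z^{k}_{0}-x\ra$ is not automatically nonnegative. To remedy this I would introduce $\overline{z}^{\,k}_{0}:=P_C(z^{k}_{0})\in C$ and split
\begin{equation*}
\la u,z^{k}_{0}-x\ra=\la u,\overline{z}^{\,k}_{0}-x\ra+\la u,z^{k}_{0}-\overline{z}^{\,k}_{0}\ra .
\end{equation*}
Since $\overline{z}^{\,k}_{0}\in C$, the variational inequality gives $\la u,\overline{z}^{\,k}_{0}-x\ra\ge 0$. For the second term, the Cauchy--Schwarz inequality together with the fact that, whether or not {\bf Inner Loop} is invoked, the construction of $z^{k}_{0}$ guarantees ${\rm dist}(z^{k}_{0},C)=\|z^{k}_{0}-\overline{z}^{\,k}_{0}\|\le\theta\alpha_k$, yields $\la u,z^{k}_{0}-\overline{z}^{\,k}_{0}\ra\ge-\|u\|\,\theta\alpha_k$. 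Consequently $-2\alpha_k\la u,z^{k}_{0}-x\ra\le 2\theta\|u\|\,\alpha_k^2$.

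Collecting the estimates, I would define
\begin{equation*}
\delta_k:=m(\eta_k\alpha_k)^2+m(m-1)\eta\,\eta_k\alpha_k^2+2\theta\|u\|\,\alpha_k^2\ \ge 0,
\end{equation*}
so that $\|z^{k+1}-x\|^2\le\|z^{k}-x\|^2+\delta_k$ for all $k$. Summability of $(\delta_k)$ then follows termwise: the first sum converges by \eqref{betapaso1}, the second by \eqref{ponto-5}, and for the third I use that \eqref{eta-k} forces $\eta_k\ge 1$, hence $\alpha_k^2\le(\eta_k\alpha_k)^2$, so $\sum_k\alpha_k^2<\infty$ again by \eqref{betapaso1}. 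Since $x\in S_*$ was arbitrary, this establishes quasi-Fejér convergence of $(z^k)_{k\in\NN}$ to $S_*$.

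I expect the main obstacle to be conceptual rather than computational. The crux is recognizing that the sign-definite term $\la u,\overline{z}^{\,k}_{0}-x\ra\ge 0$ is perturbed only by an amount controlled by ${\rm dist}(z^{k}_{0},C)\le\theta\alpha_k$ — which is exactly what the stopping criterion of {\bf Inner Loop} was designed to provide — and that this perturbation enters at order $\alpha_k^2$ and is therefore absorbed into the summable error $\delta_k$. Everything else is a direct substitution into Proposition \ref{propie}(b) and the already available summability conditions (H2), \eqref{betapaso1} and \eqref{ponto-5}.
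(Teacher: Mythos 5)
Your proposal is correct and follows essentially the same route as the paper's own proof: both apply Proposition \ref{propie}(b) at a solution point, split the cross term through $P_C(z^{k}_{0})$ so that the variational inequality kills one piece while the Inner Loop bound ${\rm dist}(z^{k}_{0},C)\le\theta\alpha_k$ controls the other via Cauchy--Schwarz, and then invoke \eqref{betapaso1} and \eqref{ponto-5} for summability. No gaps; the argument matches the paper's.
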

\begin{proof}
Take $\bar{x}\in S_*$. Then, there exists $\bar{u}\in T(\bar{x})$
such that
\begin{equation}\label{tomando_u_barra}
\la\bar{u},x-\bar{x}\ra\geq 0 \quad \forall\, x\in C,
\end{equation}
where $\bar{u}=\sum_{i=1}^{m} \bar{u}_i$, with $\bar{u}_i \in
T_i(\bar{x})$, $(i=1,2,\ldots, m)$. Using now Proposition
\ref{propie}(b) and taking $\bar{\eta}=\max_{1\le i\le
m}{\|\bar{u}_i\|}$, we get
 \begin{align}\nonumber \|z^{k+1}-\bar{x}\|^2 &\leq \|z^{k}-\bar{x}\|^2+m\left[(\eta_k\alpha_k)^2+
 (m-1)\bar{\eta}\eta_k\alpha_{k}^2\right]
 -2\alpha_k \la \bar{u},z^{k}_{0}-x\ra \\ \nonumber &=\|z^{k}-\bar{x}\|^2+m\left[(\eta_k\alpha_k)^2+
 (m-1)\bar{\eta}\eta_k\alpha_{k}^2\right]
 -2\alpha_k ( \la \bar{u},z^{k}_{0}-P_{C}(z^{k}_{0})\ra + \la \bar{u},P_{C}(z^{k}_{0})- \bar{x}\ra )\\ \nonumber
&\leq \|z^{k}-\bar{x}\|^2+m\left[(\eta_k\alpha_k)^2+
 (m-1)\bar{\eta}\eta_k\alpha_{k}^2\right]+2\alpha_k\|\bar{u}\|\mbox{dist}(z^k_0,C)
\\ \label{13'} &\leq \|z^{k}-\bar{x}\|^2+m\left[(\eta_k\alpha_k)^2+
 (m-1)\bar{\eta}\eta_k\alpha_{k}^2\right]+2\theta\|\bar{u}\|\alpha_k^2,
\end{align}
where we used \eqref{tomando_u_barra} and the Cauchy-Schwarz
inequality in the second inequality and the last inequality is a
consequence of the fact that $z^{k}_{0}$ is obtained by {\bf Inner
Loop}. It follows from \eqref{13'},
 \eqref{betapaso1} and \eqref{ponto-5} that $(z^k)_{k\in \NN}$ is quasi-Fej\'er
convergent to $S_*$.
\end{proof}
\noindent Next we establish some convergence properties of {\bf Algorithm
A}.
\begin{proposition}\label{propiedades_relaxed-2} Let $(z^k)_{k\in \NN}$ and $(x^k)_{k\in \NN}$ be the sequences generated
by {\bf Algorithm A}. Then,
%\begin{enumerate}
\item [ {\bf (a)}]$\dsty x^{k+1}=\frac{1}{\sigma_k}\sum_{j=0}^k
\alpha_j z^{j+1}$, for all $k$;

\item [ {\bf (b)}]$(x^k)_{k\in \NN}$ are bounded;

\item [ {\bf (c)}]$\lim_{k\rightarrow\infty}{\rm dist}(x^k,C)=0$;

\item [ {\bf (d)}] all weak cluster points of $(x^k)_{k\in \NN}$ belong to $C$.
%\end{enumerate}
\end{proposition}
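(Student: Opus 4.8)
The plan is to handle the four items in order, getting (a) by a short induction, (b) as an immediate consequence, (c) as the substantive step, and (d) by a soft weak-lower-semicontinuity argument.

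First I would prove (a) by induction on $k$. For $k=0$, since $\sigma_0=\alpha_0$, the update \eqref{algoritmo_2_paso_3-bonito} gives $x^1=z^1$, which equals $\frac{1}{\sigma_0}\alpha_0 z^1$. For the inductive step, I rewrite \eqref{algoritmo_2_paso_3-bonito} as $x^{k+1}=\frac{\sigma_k-\alpha_k}{\sigma_k}x^k+\frac{\alpha_k}{\sigma_k}z^{k+1}$ and use \eqref{16'} to replace $\sigma_k-\alpha_k$ by $\sigma_{k-1}$; substituting the inductive hypothesis $x^k=\frac{1}{\sigma_{k-1}}\sum_{j=0}^{k-1}\alpha_j z^{j+1}$ then collapses the expression to $\frac{1}{\sigma_k}\sum_{j=0}^{k}\alpha_j z^{j+1}$. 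Item (a) in particular exhibits $x^{k+1}$ as a convex combination of $z^1,\dots,z^{k+1}$, since the weights $\alpha_j/\sigma_k$ are nonnegative and sum to $\sigma_k/\sigma_k=1$. Item (b) then follows at once: by Proposition \ref{cuasi-Fejer*2} the sequence $(z^k)_{k\in\NN}$ is quasi-Fej\'er convergent to $S_*$, hence bounded by Proposition \ref{cuasi-Fejer}(a), and a convex combination of points with norm at most $M$ again has norm at most $M$.

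The heart of the argument is (c). First I would show that the individual iterates approach $C$, i.e.\ ${\rm dist}(z^{k+1},C)\to 0$. Writing $z^{k+1}=z^k_m$ and combining the triangle inequality for the distance with Proposition \ref{propie}(a) (taking $i=0$, $j=m$) and the stopping rule of the \textbf{Inner Loop}, which guarantees ${\rm dist}(z^k_0,C)\le\theta\alpha_k$, I obtain
\begin{equation*}
{\rm dist}(z^{k+1},C)\le\|z^k_m-z^k_0\|+{\rm dist}(z^k_0,C)\le m\eta_k\alpha_k+\theta\alpha_k .
\end{equation*}
Since \eqref{betapaso1} forces $\eta_k\alpha_k\to 0$, and $\eta_k\ge1$ gives $\alpha_k\le\eta_k\alpha_k\to0$, the right-hand side tends to $0$. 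Next, since $C$ is convex the map ${\rm dist}(\cdot,C)$ is convex, so applying it to the convex combination from (a) yields
\begin{equation*}
{\rm dist}(x^{k+1},C)\le\frac{1}{\sigma_k}\sum_{j=0}^k\alpha_j\,{\rm dist}(z^{j+1},C).
\end{equation*}
Finally I would invoke Proposition \ref{conv-convinacion-de-terminos} with $p^j:={\rm dist}(z^{j+1},C)\to0$ and weights $\zeta_{k,j}:=\alpha_j/\sigma_k$; these are nonnegative, sum to $1$, and satisfy $\zeta_{k,j}\to0$ for each fixed $j$ because \eqref{betapaso} forces $\sigma_k\to\infty$. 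Hence the averaged bound tends to $0$, and since ${\rm dist}(x^k,C)\ge0$ we conclude ${\rm dist}(x^k,C)\to0$.

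Item (d) is then immediate: by (b) the bounded sequence $(x^k)_{k\in\NN}$ has weak cluster points, and if $x^{k_l}\rightharpoonup\bar x$, the weak lower semicontinuity of the convex continuous function ${\rm dist}(\cdot,C)$ gives ${\rm dist}(\bar x,C)\le\liminf_l{\rm dist}(x^{k_l},C)=0$ by (c); since $C$ is closed, $\bar x\in C$. The main obstacle is the chain of estimates in (c): one must correctly combine the per-cycle displacement bound of Proposition \ref{propie}(a) with the inner-loop feasibility tolerance to drive the single-iterate distances to zero, and then recognize the averaging structure so that the Toeplitz-type Proposition \ref{conv-convinacion-de-terminos} applies — the verification of its hypothesis $\zeta_{k,j}\to0$ being exactly where the divergence condition \eqref{betapaso} enters. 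The remaining items are essentially bookkeeping.
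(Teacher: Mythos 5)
Your proof is correct, and items (a), (b) and (d) coincide with the paper's argument (the paper in fact dismisses (d) with ``follows directly from (c)''; your weak lower semicontinuity justification is the intended one). The only genuine divergence is in (c). The paper builds an explicit feasible comparison point $\tilde{x}^{k+1}:=\frac{1}{\sigma_k}\sum_{j=0}^k\alpha_j P_C(z^{j}_0)\in C$ and bounds ${\rm dist}(x^{k+1},C)\le\|x^{k+1}-\tilde{x}^{k+1}\|\le\frac{1}{\sigma_k}\sum_{j=0}^k(m\eta_j\alpha_j^2+\theta\alpha_j^2)$, concluding because the numerator is the partial sum of a convergent series (by \eqref{betapaso1} and \eqref{ponto-5}) while $\sigma_k\to\infty$ by \eqref{betapaso}. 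You instead first show ${\rm dist}(z^{k+1},C)\le m\eta_k\alpha_k+\theta\alpha_k\to0$ and then average via the convexity of ${\rm dist}(\cdot,C)$ together with the Toeplitz-type Proposition \ref{conv-convinacion-de-terminos} (applied to a scalar sequence, which is a legitimate special case). Both routes rest on exactly the same two estimates --- Proposition \ref{propie}(a) with $i=0$, $j=m$, and the inner-loop tolerance ${\rm dist}(z^k_0,C)\le\theta\alpha_k$ --- so the difference is only in the final limiting step: your version needs only $\eta_k\alpha_k\to0$ rather than square-summability for this particular item, which is marginally weaker, at the cost of importing Proposition \ref{conv-convinacion-de-terminos}, which the paper reserves for the proof of weak convergence later on. Either way the conclusion is sound.
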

\begin{proof}

\noindent (a):~ We proceed by induction on $k$. For $k=0$, using
\eqref{algoritmo_2_paso_3-bonito} and that $\sigma_0=\alpha_0$, we
have that $x^1=z^{1}$. By hypothesis of induction, assume that

\begin{equation}\label{hip-ind}x^k=\dsty\frac{1}{\sigma_{k-1}}\sum_{j=0}^{k-1}\alpha_j z^{j+1}.\end{equation}
\noindent Using \eqref{16'} and \eqref{algoritmo_2_paso_3-bonito},
we obtain
$$
x^{k+1}=\frac{\sigma_{k-1}}{\sigma_{k}}x^k+\frac{\alpha_{k}}{\sigma_{k}}z^{k+1}.
$$ By \eqref{hip-ind} and the above equation, we get
$$x^{k+1}=\dsty\frac{1}{\sigma_{k}}\sum_{j=0}^{k-1}\alpha_j
z^{j+1}+\dsty\frac{\alpha_{k}}{\sigma_{k}}z^{k+1}=\dsty\frac{1}{\sigma_{k}}\sum_{j=0}^{k}\alpha_j
z^{j+1},$$ proving the assertion.

\medskip

\noindent (b):~ Using Proposition \ref{cuasi-Fejer*2} and
Proposition \ref{cuasi-Fejer}(a), we have the boundedness of
$(z^k)_{k\in \NN}$. We assume that there exists $R>0$ such that
$\|z^k\|\leq R$, for all $k$. By the previous item, $$\|x^k\|\leq
\dsty\frac{1}{\sigma_{k-1}}\sum_{j=0}^{k-1}\alpha_j\|z^{j+1}\|\leq
R,$$ for all $k$.

\medskip

\noindent (c):\; It follows from definition of $z^{k}_{0}$ that
\begin{equation}\label{distancia-menor-beta-k}{\rm dist}(z^{k}_{0},C)\leq\theta\alpha_k.\end{equation}
\noindent Define
\begin{equation}\label{27}
\tilde{x}^{k+1}:=\frac{1}{\sigma_k}\sum_{j=0}^k\alpha_j
P_C({z^{j}_0}).
\end{equation}
\noindent Since $\dsty\frac{1}{\sigma_k}\sum_{j=0}^k\alpha_j=1$ by
\eqref{16'}, we get from the convexity of $C$, that
$\tilde{x}^{k+1}\in C$. Thus,
\begin{align}\label{22}\nonumber
{\rm dist}(x^{k+1},C)&\leq
\|x^{k+1}-\tilde{x}^{k+1}\|=\left\|\frac{1}{\sigma_k}\sum_{j=0}^k\alpha_j\left(z^{j+1}-P_C({z^{j}_0})\right)\right\|\leq\frac{1}{
\sigma_k}\sum_{j=0}^k\alpha_j\left \|z^{j+1}-P_C(z^{j}_0)\right\| \\
&\le \frac{1}{
\sigma_k}\sum_{j=0}^k\alpha_j\left(\|z^{j+1}-z^{j}_0\| +
\|z^{j}_0-P_C(z^{j}_0)\|\right)\le \frac{1}{\sigma_k}\sum_{j=0}^k
\alpha_j(m\, \eta_j\alpha_j+ {\rm dist }(z^{j}_0,C))\nonumber\\&\leq
\frac{1}{\sigma_k}\sum_{j=0}^k (m\,
\eta_j\alpha_j^2+\theta\alpha_j^2)\le\frac{m+\theta}{\sigma_k}\sum_{j=0}^\infty \,
\eta_j\alpha_j^2,
\end{align}
using the fact that $\tilde{x}^{k+1}$ belongs to $C$ in the first
inequality, (b) and \eqref{27} in the equality, convexity of
$\|\cdot\|$ in the second inequality, Proposition \ref{propie}(a),
with $j=m$ and $i=0$, in the fourth inequality,
\eqref{distancia-menor-beta-k} in the fifth inequality and that $\eta_j\ge1$ for all $j$ in the last one. Taking limits in
\eqref{22} and using \eqref{16'}, \eqref{betapaso} and \eqref{ponto-5}, we get
$\lim_{k\rightarrow\infty}{\rm dist}(x^{k+1},C)=0$, establishing
(c).

\medskip

%\item [iv)]

\noindent (d):\; Follows directly from (c).
%\end{enumerate}
\end{proof}

\noindent Next we prove optimality of the cluster points of
$(x^k)_{k\in \NN}$.

\begin{theorem}\label{todos-ptos-de-acumulacion-son-solucion}
All weak cluster points of the sequence $(x^k)_{k\in \NN}$ generated
by {\bf Algorithm A} solve problem \eqref{problem}.
\end{theorem}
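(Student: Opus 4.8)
The plan is to certify membership in $S_*$ through the Minty-type reformulation of Lemma \ref{consecuencia-continuidad}, which is precisely tailored to circumvent the set-valuedness of $T$: instead of having to exhibit some $u_*\in T(\hat x)$ realizing the variational inequality at a weak cluster point $\hat x$, it suffices to verify that $\langle v, y-\hat x\rangle\ge 0$ for \emph{every} pair $(y,v)$ with $y\in C$ and $v\in T(y)$. Since Proposition \ref{propiedades_relaxed-2}(d) already delivers $\hat x\in C$, the entire burden reduces to establishing this family of inequalities at the limit point.

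So I would fix $y\in C$ and $v\in T(y)$, write $v=\sum_{i=1}^m v_i$ with $v_i\in T_i(y)$, and set $\eta:=\max_i\|v_i\|$. Applying Proposition \ref{propie}(b) to this pair and rearranging to isolate $2\alpha_k\langle v, z_0^k-y\rangle$, I would sum over $k=0,\dots,N$ so that the squared-distance terms telescope. Discarding the nonnegative remainder $\|z^{N+1}-y\|^2$ and invoking the summability conditions \eqref{betapaso1} and \eqref{ponto-5}, the right-hand side is bounded by a finite constant independent of $N$. Dividing through by $\sigma_N$ and using $\sigma_N=\sum_{k=0}^N\alpha_k\to\infty$ (from \eqref{16'} and \eqref{betapaso}) then yields $\limsup_{N}\frac{1}{\sigma_N}\sum_{k=0}^N\alpha_k\langle v, z_0^k-y\rangle\le 0$.

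It then remains to transfer this Cesàro estimate from the inner-loop outputs $z_0^k$ to the averaged iterates $(x^k)$. Using the averaging identity of Proposition \ref{propiedades_relaxed-2}(a), one has $\langle v, x^{N+1}-y\rangle=\frac{1}{\sigma_N}\sum_{k=0}^N\alpha_k\langle v, z^{k+1}-y\rangle$, and since $z^{k+1}=z_m^k$, the gap between this quantity and the previous average is at most $\frac{1}{\sigma_N}\sum_{k=0}^N\alpha_k\|v\|\,\|z_m^k-z_0^k\|$. By Proposition \ref{propie}(a) with $j=m$, $i=0$, this is bounded by $\frac{m\|v\|}{\sigma_N}\sum_{k=0}^N\eta_k\alpha_k^2\to 0$, again by \eqref{ponto-5}. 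Hence $\limsup_N\langle v, x^{N+1}-y\rangle\le 0$. Passing to a subsequence $x^{k_\ell}\rightharpoonup\hat x$ and using weak continuity of $\langle v,\cdot\rangle$, the subsequential limit $\langle v, \hat x-y\rangle$ is dominated by this $\limsup$, so $\langle v, y-\hat x\rangle\ge 0$. As $y\in C$ and $v\in T(y)$ were arbitrary, Lemma \ref{consecuencia-continuidad} gives $\hat x\in S_*$.

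I expect the main obstacle to be the bookkeeping that links the two averages: the energy inequality of Proposition \ref{propie}(b) naturally produces the \emph{nearly feasible} point $z_0^k$, which lies only within distance $\theta\alpha_k$ of $C$, whereas the averaged iterate $x^{k+1}$ is assembled from the cycle endpoints $z^{k+1}=z_m^k$. The reconciliation hinges on the uniform cycle bound $\|z_m^k-z_0^k\|\le m\,\eta_k\alpha_k$ together with the square-summable weighting \eqref{ponto-5}, ensuring that the averaging washes out every error term. The genuinely delicate conceptual point, namely avoiding any demiclosedness or limit argument inside the graph of $T$, is already defused by the decision to test against the dual characterization of Lemma \ref{consecuencia-continuidad} rather than the primal formulation \eqref{problem}.
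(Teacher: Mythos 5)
Your proposal is correct and follows essentially the same route as the paper: the energy estimate of Proposition \ref{propie}(b) tested at an arbitrary $(y,v)$ with $v=\sum_i v_i$, telescoping and dividing by $\sigma_N$, the averaging identity of Proposition \ref{propiedades_relaxed-2}(a) together with the cycle bound of Proposition \ref{propie}(a) to pass from $z_0^k$ to $x^{N+1}$, feasibility of the cluster point from Proposition \ref{propiedades_relaxed-2}(d), and the Minty characterization of Lemma \ref{consecuencia-continuidad}. The only (immaterial) difference is order of operations: the paper replaces $z_0^j$ by $z^{j+1}$ inside the inequality before summing, whereas you sum first and then correct the Ces\`aro average.
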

\begin{proof}
Using Proposition \ref{propie}(b), we get, for any $x\in C$, $u\in
T(x)$ and for all $j>0$,
\begin{align}\label{222}\nonumber
\|z^{j+1}-x\|^2 &\leq \|z^{j}-x\|^2+m\left[(\eta_j\alpha_j)^2+
(m-1)\eta\eta_j\alpha_{j}^2\right]-2\alpha_j \la u,z^{j}_{0}-x\ra,
\\ \nonumber &= \|z^j-x\|^2 +m\left[(\eta_j\alpha_j)^2+
(m-1)\eta\eta_j\alpha_{j}^2\right]-2\alpha_j \la u,z^{j}_{0}-z^{j+1}\ra-2\alpha_j\la u,z^{j+1}-x \ra \\
\nonumber &\leq \|z^j-x\|^2 + m\left[(\eta_j\alpha_j)^2+
(m-1)\eta\eta_j\alpha_{j}^2\right]+2\alpha_j \| u\|\|z^{j}_{0}-z^{j+1}\|-2\alpha_j\la u,z^{j+1}-x \ra \\
&\leq \|z^j-x\|^2 +m\left[(\eta_j\alpha_j)^2+
(m-1)\eta\eta_j\alpha_{j}^2\right]+ 2m\|u\|\eta_j\alpha_j^2
-2\alpha_j\la u,z^{j+1}-x \ra,
\end{align} using the Cauchy-Schwarz inequality in the second inequality and Proposition \ref{propie}(a), with $j=m$ and $i=0$, in the last one.
Rewriting and summing \eqref{222} from $j=0$ to $j=k$ and dividing
by $\sigma_{k}$, we obtain from Proposition
\ref{propiedades_relaxed-2}(a) that
\begin{align*}\label{desigualdad_sumada1}
\frac{1}{\sigma_{k}}\sum_{j=0}^{k}\left(\|z^{j+1}-x
\|^2-\|z^j-x\|^2-m\left[(\eta_j\alpha_j)^2-(m-1)\eta\eta_j\alpha_{j}^2
- 2\|u\|\eta_j\alpha_j^2\right]\right)\leq 2\la u,x-x^{k+1}\ra.
\end{align*}
After rearrangements, we have
\begin{equation}\label{desigualdad_sumada}
\frac{1}{\sigma_{k}}\big(\|z^{k+1}-x
\|^2-\|z^0-x\|^2-m\sum_{j=0}^\infty\left[(\eta_j\alpha_j)^2-(m-1)\eta\eta_j\alpha_{j}^2
- 2\|u\|\eta_j\alpha_j^2\right]\big)\leq 2\la u,x-x^{k+1}\ra.
\end{equation}
Let $\hat{x}$ be a weak cluster point of $(x^k)_{k\in \NN}$.
Existence of $\hat{x}$ is guaranteed by Proposition
\ref{propiedades_relaxed-2}(b). Note that $\hat{x}\in C$ by
Proposition \ref{propiedades_relaxed-2}(d). Taking limits in
\eqref{desigualdad_sumada}, using the boundedness of $(z^k)_{k\in
\NN}$ by Proposition \ref{cuasi-Fejer*2} and \eqref{betapaso}-\eqref{ponto-5}, we
obtain that $\la u,x-\hat{x}\ra\geq0$ for all $x\in C$ and $u\in
T(x)$. Using Lemma \ref{consecuencia-continuidad}, we get that
$\hat{x}\in S_*$. Hence, all weak cluster points of $(x^k)_{k\in
\NN}$ solve problem \eqref{problem}.
\end{proof}
\noindent We now  state and prove the weak convergence of the main
sequence generated by {\bf Algorithm A}.
\begin{theorem}\label{convergencia-a-la-solucion}
Define $x_*=\lim_{k\rightarrow\infty} P_{S_*}(z^k)$. Then
$(x^k)_{k\in \NN}$ converges weakly to $x_*$.
\end{theorem}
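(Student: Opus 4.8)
The plan is to show that every weak cluster point of $(x^k)_{k\in\NN}$ coincides with $x_*$; since $(x^k)_{k\in\NN}$ is bounded by Proposition \ref{propiedades_relaxed-2}(b), this forces the whole sequence to converge weakly to $x_*$. First note that $x_*$ is well defined: by Proposition \ref{cuasi-Fejer*2} the sequence $(z^k)_{k\in\NN}$ is quasi-Fej\'er convergent to the closed convex set $S_*$ (closed and convex by Lemma \ref{cond_limitacion}), so Lemma \ref{distancia-q-F-convergente} guarantees that $(P_{S_*}(z^k))_{k\in\NN}$ converges strongly, and its limit is exactly $x_*\in S_*$. Moreover, by Theorem \ref{todos-ptos-de-acumulacion-son-solucion} every weak cluster point $\hat x$ of $(x^k)_{k\in\NN}$ already lies in $S_*$; the task is therefore only to \emph{identify} $\hat x$ with $x_*$.

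The identification rests on the averaging representation $x^{k+1}=\frac{1}{\sigma_k}\sum_{j=0}^k\alpha_j z^{j+1}$ from Proposition \ref{propiedades_relaxed-2}(a). Setting $\zeta_{k,j}:=\alpha_j/\sigma_k$, the divergence $\sum_k\alpha_k=\infty$ in \eqref{betapaso} yields $\sigma_k\to\infty$, so $\zeta_{k,j}\to0$ for each fixed $j$, while \eqref{16'} gives $\sum_{j=0}^k\zeta_{k,j}=1$; hence these weights satisfy the hypotheses of Proposition \ref{conv-convinacion-de-terminos}. Applying that proposition to the strongly convergent sequence $p^j:=P_{S_*}(z^{j+1})\to x_*$, the averaged projections $\tilde w^k:=\frac{1}{\sigma_k}\sum_{j=0}^k\alpha_j P_{S_*}(z^{j+1})$ converge strongly to $x_*$. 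Consequently $x^{k+1}-\tilde w^k=\frac{1}{\sigma_k}\sum_{j=0}^k\alpha_j\bigl(z^{j+1}-P_{S_*}(z^{j+1})\bigr)$, and it suffices to control the inner products of this difference against directions $y-x_*$ with $y\in S_*$.

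Fix $y\in S_*$. Splitting $z^{j+1}-P_{S_*}(z^{j+1})$ and using the projection inequality of Lemma \ref{popiedades_projeccion}(b) with $S=S_*$, $x=z^{j+1}$ and the point $y\in S_*$, one obtains $\la z^{j+1}-P_{S_*}(z^{j+1}),\,y-x_*\ra\le \|z^{j+1}-P_{S_*}(z^{j+1})\|\,\|P_{S_*}(z^{j+1})-x_*\|=:\epsilon_j$, and $\epsilon_j\to0$ since $(z^k)_{k\in\NN}$ is bounded while $P_{S_*}(z^{j+1})\to x_*$ strongly. Averaging with the weights $\zeta_{k,j}$ and invoking the same averaging argument (Proposition \ref{conv-convinacion-de-terminos} applied to the real null sequence $(\epsilon_j)_{j\in\NN}$) gives $\la x^{k+1}-\tilde w^k,\,y-x_*\ra\le \frac{1}{\sigma_k}\sum_{j=0}^k\alpha_j\epsilon_j\to0$.

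Finally, let $x^{k_\ell+1}\rightharpoonup\hat x$ along a subsequence. Since $\tilde w^{k_\ell}\to x_*$ strongly, $x^{k_\ell+1}-\tilde w^{k_\ell}\rightharpoonup\hat x-x_*$, so passing to the limit in the preceding estimate yields $\la\hat x-x_*,\,y-x_*\ra\le0$ for every $y\in S_*$. Because $\hat x\in S_*$, the choice $y=\hat x$ gives $\|\hat x-x_*\|^2\le0$, that is $\hat x=x_*$. As every weak cluster point of the bounded sequence $(x^k)_{k\in\NN}$ equals $x_*$, the whole sequence converges weakly to $x_*$. I expect the main obstacle to be exactly the step of the last paragraph: reconciling the \emph{weak} convergence of the averaged iterates $x^{k+1}$ with the \emph{strong} convergence of the averaged projections $\tilde w^k$, so that the projection inequality, which controls only each $z^{j+1}$ against its own projection, can be transferred through the averaging to the single weak limit $\hat x$.
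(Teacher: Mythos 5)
Your proof is correct and follows essentially the same route as the paper: both define $x_*$ via the quasi-Fej\'er property of $(z^k)_{k\in\NN}$ and Lemma \ref{distancia-q-F-convergente}, use the averaging representation of Proposition \ref{propiedades_relaxed-2}(a) together with Proposition \ref{conv-convinacion-de-terminos}, and identify each weak cluster point with $x_*$ through the projection inequality of Lemma \ref{popiedades_projeccion}(b). The only cosmetic difference is that you test that inequality against an arbitrary $y\in S_*$ and specialize to $y=\hat x$ at the end, whereas the paper tests it directly against the cluster point $\bar x$.
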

\begin{proof}
Define $p^k:=P_{S_*}(z^k)$ the orthogonal
projection of $z^k$ onto $S_*$. Note that $p^k$ exists, since the solution set $S_*$
is nonempty, closed and convex by Lemma \ref{cond_limitacion}. By
Proposition \ref{cuasi-Fejer}, $(z^k)_{k\in \NN}$ is quasi-Fej\'er
convergent to $S_*$. Therefore, it follows from Lemma
\ref{distancia-q-F-convergente} that $(P_{S_*}(z^k))_{k\in \NN}$ is
strongly convergent. Set
\begin{equation}\label{uk-converge-a-x*}x_*:=\lim_{k\rightarrow\infty} P_{S_*}(z^k)=
\lim_{k\rightarrow\infty} p^k.\end{equation} By Proposition
\ref{propiedades_relaxed-2}(b), $(x^k)_{k\in \NN}$ is bounded and by
Theorem \ref{todos-ptos-de-acumulacion-son-solucion} each of its
weak cluster points belong to $S_*$. Let $(x^{i_k})_{k\in \NN}$ be
any weakly convergent subsequence of $(x^k)_{k\in \NN}$, and let
$\bar{x}\in S_*$ be its weak limit. In order to establish the weak convergence of $(x^k)_{k\in
\NN}$, it suffices to show that
$\bar{x}=x_*$.

By Lemma \ref{popiedades_projeccion}(b) we have that
$\la\bar{x}-p^j,z^j-p^j\ra\leq0$ for all $j$. Let $\xi=\sup_{0\leq
j\leq\infty}\|z^j-p^j\|$. Since $(z^k)_{k\in \NN}$ is bounded by
Proposition \ref{cuasi-Fejer}(a), we get that $\xi<\infty$. Using
the Cauchy-Schwarz inequality,
\begin{equation}\label{b}
\la\bar{x}-x_*,z^j-p^j\ra\leq\la p^j-x_*,z^j-p^j\ra\leq
\xi\,\|p^j-x_*\|,
\end{equation}
for all $j$. Multiplying \eqref{b} by
$\displaystyle\frac{\alpha_{j-1}}{\sigma_{k-1}}$ and summing from
$j=1$ to $k-1$, we get from Proposition
\ref{propiedades_relaxed-2}(a),
\begin{equation}\label{unicidad}
\left\la\bar{x}-x_*,x^{k}-\frac{1}{\sigma_{k-1}}\sum_{j=1}^{k-1}
\alpha_{j-1} p^j\right\ra\leq
\frac{\xi}{\sigma_{k-1}}\sum_{j=1}^{k-1}\alpha_{j-1}\|p^j-x_*\|.
\end{equation}
\noindent Define
$$\zeta_{k,j}:=\frac{\alpha_j}{\sigma_{k}} \ \ \ \ (k\ge 0,\ \ \ 0\le j\le k).$$
It follows from the definition of $\sigma_k$, that
$\lim_{k\rightarrow\infty}\zeta_{k,j}=0$ for all $j$ and
$\sum_{j=0}^k\zeta_{k,j}=1$ for all $k$. Using
\eqref{uk-converge-a-x*} and Proposition
\ref{conv-convinacion-de-terminos} with $\dsty
w^k=\sum_{j=1}^{k}\zeta_{k-1,j-1}p^{j}=\frac{1}
{\sigma_{k-1}}\sum_{j=0}^{k-1}\alpha_j p^{j+1}$, we have
\begin{equation}\label{conv-convexa-converge-c}
x_*=\lim_{k\to \infty}p^k=\lim_{k\to
\infty}\frac{1}{\sigma_{k}}\sum_{j=0}^{k}\alpha_jp^{j+1},
\end{equation}
and
\begin{equation}\label{conv-convexa-converge-b}
\lim_{k\rightarrow\infty}\frac{1}{\sigma_{k}}\sum_{j=0}^{k}\alpha_j\|p^{j+1}-x_*\|=0.
\end{equation}
Taking limits in \eqref{unicidad} over the subsequence $(i_k)_{k\in
\NN}$, and using \eqref{conv-convexa-converge-c} and
\eqref{conv-convexa-converge-b}, we get
$\left\la\bar{x}-x_*,\bar{x}-x_*\right\ra\leq0,$ implying that
$\bar{x}=x_*$.
\end{proof}
\section{\bf Final Remarks}\label{remarks}
In this section we discuss the assumptions of our scheme,
showing examples as well as some alternatives for changing these assumptions.

One problem in establishing the well definition of the sequence
generated by {\bf Algorithm A} may be the difficulty of choosing stepsizes
satisfying Assumption~(H2); see \eqref{betapaso}-\eqref{betapaso1}.
This is clear in the important case, where the operators have
bounded range, the sequence $(\eta_k)_{k\in \NN}$, defined in
\eqref{eta-k}, is bounded. Hence, any sequence $(\alpha_k)_{k\in
\NN}$ in $\ell_2(\NN)\setminus\ell_1(\NN)$ may be used satisfying
(H2). 

Now we present some examples showing that (H2) is verified for
some different instances.
\begin{example}\label{Ex1}
Consider the variational inequality problem in a Hilbert space
$\HH$, for $T$ and $S=\argmin_{x\in \HH} f(x)$, where
$T:\HH\rightrightarrows\HH$ is a maximal monotone operator and $f:
\HH\rightarrow\RR$ is a continuous convex function which satisfies
(H1). This problem is equivalent to problem \eqref{problem}, with
$m=1$, $T_1=T$, $c=f-f_*$, where $f_*=\min_{x\in \HH} f(x)$ and
Algorithm $A$ may be rewritten as follows:
\begin{center}\fbox{\begin{minipage}[b]{\textwidth}
%\noindent {\bf {\bf Algorithm A}1}
\begin{AlgA1} Given $\theta>0$

\noindent{\bf Initialization step:} Take $ x^0\in \HH$. Define $z^0:=x^0$ and $\sigma_{0}:=\beta_0$.

\medskip
\noindent {\bf Iterative step:} Given $z^k$. If $f(z^k)\le f_*$,
then take $z^k_0:=z^k$ and $S_k=\{x\in \HH:\,\la
g^{k},x-z^k_0\ra\leq 0\}$ where $g^k\in \partial f(z^k_0)$. Else,
compute $(z^k_0,S_k):={\bf Inner Loop}(z^k,\theta,\beta_k)$ like {\bf Algorithm A}. Set
\begin{equation*}
z^{k+1}=P_{S_k}\left(z_0^{k}-\frac{\beta_k}{\eta_k} u^{k}\right),
\end{equation*}
where $u^{k}\in T(z_0^{k})$ and $\eta_k=\max\{1,\|u^{k}\|\}$. Define
\begin{equation*}\sigma_{k}:=\sigma_{k-1}+\frac{\beta_k}{\eta_k},\end{equation*}
\begin{equation*}
x^{k+1}:=\left(1-\frac{\beta_k}{\sigma_{k}}\right)x^{k}+\frac{\beta_k}{
\sigma_{k}} z^{k+1}.
\end{equation*}
\end{AlgA1}\end{minipage}}\end{center}
\end{example}
\noindent {\bf Algorithm A1} is the point-to-set version of the algorithm proposed
in \cite{yunier-iusem-2}. Assuming that the problem has solutions
and that $(\beta_k)_{k\in \NN}$ in
$\ell_2(\NN)\setminus\ell_1(\NN)$, the analysis of the convergence
follows directly from the analysis in \cite{yunier-iusem-2}. (The
inner loop is slightly different however the convergence proof
remains essentially unchanged.)
\begin{example}\label{Ex2}
We consider the optimization problem of the form
\begin{equation}\label{prob-ex2}
\min_{x\in X} \quad \phi_1(L(x))+\phi_2(x),
\end{equation}
where $L:X\rightarrow Y$ is a continuous linear operators, with
closed range, $X$ and $Y$ are two Hilbert spaces and $\phi_1:
Y\rightarrow \RR$, $\phi_2: X\rightarrow \RR$ are convex and
continuous functions.

This is a classical problem which appears in many applications in
mechanics and economics; see, for instance, \cite{gabay}. Denote
$K=\{(x,y)\in X\times Y\,:\, L(x)-y=0\},$ and
$$A=\left(\begin{array}{lr}\partial \phi_1& 0\\ 0& 0\end{array}\right) \quad \quad B=\left(\begin{array}{lr} 0& 0\\ 0& \partial \phi_2\end{array}\right).$$
$A$ and $B$ are maximal monotone and \eqref{prob-ex2} is
equivalent to problem \eqref{problem}, with $m=2$, $T_1=A$, $T_2=B$
and $C=K$ in $\HH=X\times Y$.

In this case our algorithm does not require the inner loop, since
the feasible set $K$ is a linear and closed subspace. Thus, the
projection onto $K$ is easy to compute; in effect, the set $K$ can
be rewritten as $$K=\{(x,y)\in X\times Y\,:\,
c(x,y)=\frac{1}{2}\|L(x)-y\|^2\le 0\},$$ and $\nabla
c(x,y)=\left(%
\begin{array}{c}
 L^*(L(x)-y) \\
  y-L(x) \\
\end{array}%
\right)$ and hence,
$$P_K(x,y)=(x,y)-\frac{1}{2}\|L(x)-y\|^2\frac{\nabla
c(x,y)}{\|\nabla c(x,y)\|^2}.$$ For this example {\bf Algorithm A} may
be rewritten as follows:

\begin{center}\fbox{\begin{minipage}[b]{\textwidth}
			%\noindent {\bf {\bf Algorithm A}1}
			\begin{AlgA2} 

\item []\noindent{\bf Initialization step:} Take $ x^0:=(x_1^0,x_2^0)\in K$. Define $z^0:=x^0$ and $\sigma_{0}:=\beta_0$.

\medskip

\noindent {\bf Iterative step:} Given $z^k=(z_1^{k},z_2^{k})$.
Compute

\begin{equation*}
(z^{k+1}_{1,1}\,,\,z^{k+1}_{1,2})=P_{K}\left(z_1^{k}-\frac{\beta_k}{\eta_k}
u_1^{k}\,,\,z_2^{k}\right),
\end{equation*}
and
\begin{equation*}
(z^{k+1}_{2,1}\,,\,z^{k+1}_{2,2})=P_{K}\left(z^{k+1}_{1,1}\,,\,z^{k+1}_{1,2}-\frac{\beta_k}{\eta_k}
u_2^{k}\right),
\end{equation*}
where $u_{1}^{k}\in \partial \phi_1 (z_1^{k})$, $u_{2}^{k}\in
\partial \phi_2 (z_{1,2}^{k+1})$ and
$\eta_k=\max\{1,\|u^{k}_{1}\|\,\|u^{k}_{2}\|\}$. Set
$z^{k+1}=(z_{2,1}^{k+1},z^{k+1}_{2,2})$, and define
\begin{equation*}\sigma_{k}:=\sigma_{k-1}+\frac{\beta_k}{\eta_k},\end{equation*}
\begin{equation*}
x_1^{k+1}:=\left(1-\frac{\beta_k}{\sigma_{k}}\right)x_1^{k}+\frac{\beta_k}{
\sigma_{k}} z^{k+1}_{2,1}.
\end{equation*}
and
\begin{equation*}
x_2^{k+1}:=\left(1-\frac{\alpha_k}{\sigma_{k}}\right)x_2^{k}+\frac{\beta_k}{
\sigma_{k}} z^{k+1}_{2,2}.
\end{equation*}
Set
\begin{equation*}
x^ {k+1}=\left(x_1^{k+1},x_2^{k+1}\right).
\end{equation*}
\end{AlgA2}\end{minipage}}\end{center}

\end{example}

\begin{example}\label{Ex3}
Consider the minimax problem:
\begin{equation}\label{prob-ex3}
\min_{x_1\in X} \max_{x_2\in X} \{ \phi_1(x_1)-\phi_2(x_2)+\la x_2,
L(x_1)\ra\},
\end{equation}
where $L:X\rightarrow X$ is a self adjoint and continuous linear
operator, $X$ is a Hilbert space, $\phi_1: X\rightarrow \RR$,
$\phi_2: X\rightarrow \RR$ are convex and continuous functions and
$\phi_2$ is G\^ateaux differentiable. This problem was presented in
\cite{Rock} and under a suitable constraint qualification, this
problem is equivalent to problem \eqref{problem}, with $m=2$,
$\HH=C=X\times X$, and $$T_1(x_1,x_2)=A(x_1,x_2)=(\partial
\phi_1(x_1),0)$$ and $$T_2(x_1,x_2)=B(x_1,x_2)=(L(x_2),\nabla
\phi_2(x_2)-L(x_1)),$$ which are maximal monotone operators.
{\bf Algorithm A} can be rewritten as follows:

\begin{center}\fbox{\begin{minipage}[b]{\textwidth}
			%\noindent {\bf {\bf Algorithm A}1}
			\begin{AlgA3} 
				
				\item []\noindent{\bf Initialization step:} Take $ x^0:=(x_1^0,x_2^0)\in
X\times X$. Define $z^0:=x^0$ and
$\sigma_{0}:=\beta_0$.

\medskip

\noindent {\bf Iterative step:} Given $z^k=(z_1^{k},z_2^{k})$.
Compute
\begin{align*}
z^{k+1}_{1,1}=z_1^{k}-\frac{\beta_k}{\eta_k}u_1^{k}\qquad &
z^{k+1}_{2,1}=  z_2^{k},\\
z^{k+1}_{1,2}=z^{k+1}_{1,1}-\frac{\beta_k}{\eta_k} L(z^{k+1}_{2,1})
\qquad &\dsty z^{k+1}_{2,2}=
z_{2,1}^{k}-\frac{\beta_k}{\eta_k}\left(\nabla \phi_2(z^{k+1}_{2,1})
-L(z^{k+1}_{1,1})\right),
\end{align*}
where $u_{1}^{k}\in \partial \phi_1 (z_1^{k})$ and
$\eta_k=\max\left\{1,\|u^{k}_{1}\|,\|L(z^{k+1}_{2,1})\|,\|\nabla
\phi_2(z^{k+1}_{2,1}) -L(z^{k+1}_{1,1})\|\right\}$. Set
$z^{k+1}:=(z_{1}^{k+1},z^{k+1}_{2})=(z_{1,2}^{k+1},z^{k+1}_{2,2})$,
and define
\begin{equation*}\sigma_{k}:=\sigma_{k-1}+\frac{\beta_k}{\eta_k},\end{equation*}
\begin{equation*}
x_1^{k+1}:=\left(1-\frac{\beta_k}{\sigma_{k}}\right)x_1^{k}+\frac{\beta_k}{
\sigma_{k}} z^{k+1}_1.
\end{equation*}
\begin{equation*}
x_2^{k+1}:=\left(1-\frac{\beta_k}{\sigma_{k}}\right)x_2^{k}+\frac{\beta_k}{
\sigma_{k}} z^{k+1}_2.
\end{equation*}
Set
\begin{equation*}
x^ {k+1}=(x_1^{k+1},x_2^{k+1}).
\end{equation*}
\end{AlgA3}\end{minipage}}\end{center}
\end{example}

\noindent In {\bf Algorithm A2} and {\bf Algorithm A3}, presented in the above
examples, the stepsize is $\dsty \alpha_k=\beta_k/\eta_k$,
for all $k$ and hence $\eta_k\alpha_k=\beta_k$ and \eqref{betapaso1}
is equivalent to $(\beta_k)_{k\in \NN}$ in $\ell_2(\NN)$, which by
Proposition \ref{cuasi-Fejer*2} implies, the boundedness of the
sequence $(z^k)_{k\in \NN}$. Moreover as a consequence of
Proposition \ref{propie}(a) the sequences $(z_{i-1}^k)_{k\in \NN}$,
$(i=1,2,\ldots,m )$ are bounded. Now, condition \eqref{betapaso}
becomes in
\begin{equation}\label{nueva-beta-paso}\dsty\sum_{k=0}^\infty\frac{\beta_k}{\eta_k}=\infty,\end{equation}
where $\eta_k=\max_{1\le i\le m}\{1,\|u_i^k\|\}$, with $u_i^k\in
T_i(z_{i-1}^k)$. Thus, a sufficient condition for
\eqref{nueva-beta-paso} is that the image of $T_i$,
$(i=1,2,\ldots,m)$ is bounded on bounded sets (since the sequences
$(z_{i-1}^k)_{k\in \NN}$, $(i=1,2,\ldots,m )$ are bounded).
Moreover, $(\eta_k)_{k\in \NN}$ may be an unlimited sequence as for
example $(k^s)_{k\in \NN}$ for any $s\in (0,1/2)$.

Therefore Assumption~(H2) turns into ``$(\beta_k)_{k\in \NN}$ lies
in $\ell_2(\NN)\setminus\ell_1(\NN)$'', which is a requirement widely used in the literature. The convergence analysis of
{\bf Algorithm A2} and {\bf Algorithm A3} follows from the convergence analysis of
{\bf Algorithm A}.

Another important point on {\bf Algorithm A}, is that {\bf Inner Loop} uses the distance function. It is clear that
this is weakest that compute the exact projection for almost all
instances. Furthermore inside of the inner loop, we may only check
the condition related with the distance on selected index. In
connection we may include the following assumption:
\begin{enumerate}
\item[(H3)] Assume that a Slater point is available, i.e. there exists a point $w\in \HH$ such that $c(w)<0$.
\end{enumerate}
\noindent If Assumption~(H3) holds, by Lemma \ref{conjetura} we can
obtain an explicit algorithm for a quite general convex set $C$,
replacing the inequality ${\rm dist}(y^{k,j+1}, C)\leq \theta
\alpha_k$ in {\bf Inner Loop} on {\bf Algorithm A} by
$\tilde{c}(y^{k,j+1})\leq \theta \alpha_k$, where
$$\tilde{c}(x)=\left\{
\begin{array}{cc}
\dsty\frac{\|x-w\|c(x)}{c(x)-c(w)} & \mbox{if} \,\, x\notin C \\ \\
0 & \mbox{if} \,\, x\in C.\end{array} \right.$$ All our convergence
results are preserved. (H3) is a hard assumption in Hilbert spaces
and the point $w$ is almost always unavailable. Hence, such
assumptions can be replaced by a rather weaker one, namely:

\begin{enumerate}
\item [(H3$^*$)] There exists an easily computable and continuous
$\tilde{c}:\HH\rightarrow\RR$, such that ${\rm dist}(x,C)\le
\tilde{c}(x)$ for all $x\in \HH$, and $\tilde{c}(x)=0$ if and only
if $c(x)=0$.
\end{enumerate}
There are examples of sets $C$ for which no Slater point is
available, while (H3$^*$) holds, including instances in which $C$
has an empty interior. An exhaustive discussion about weak
constraint qualifications for getting error-bound can be found in
\cite{sien, lewis}. 

Finally, regarding the complexity of the proposed inner loop in {\bf Algorithm A}. To stop the inner loop at an $\alpha$-solution, in the worse case, we need to do
$\mathcal{O}\left(\alpha^{-2}\right)$ iterations; see Section $3.2.1$ of \cite{Book-Nest}. Emphasizing that here we work with a general nonsmooth and convex function $c$ and to get better performance of our inner loop we have to assume, at least, differentiability of $c$; see, for instance, \cite{nesterov-1983} and the references therein.

\subsubsection*{ACKNOWLEDGMENT}

\noindent JYBC and RDM were partially supported by project
CAPES-MES-CUBA 226/2012. JYBC was partially supported by CNPq grants
303492/2013-9, 474160/2013-0 and 202677/2013-3 and by project
UNIVERSAL FAPEG/CNPq. RDM was supported by his scholarship for his
doctoral studies, granted by CAPES.

This work was completed while the first author was visiting the
University of British Columbia. The author is very grateful for the
warm hospitality. The authors would like to thank to Professor Dr.
Heinz H. Bauschke, Professor Dr. Ole Peter Smith and anonymous
referees
whose suggestions helped us to improve the presentation of
this paper.

\bibliographystyle{plain}

%\bibliography{Yunierbib}
\end{document}